\newtheorem{theorem}{Theorem}
\newtheorem{proposition}[theorem]{Proposition}
\newtheorem{lemma}[theorem]{Lemma}
\newtheorem{remark}{Remark}
\newtheorem{conjecture}{Conjecture}
\newtheorem{problem}{Problem}
\newcommand{\cD}{\mathcal{D}}
\newcommand{\bC}{\mathbb{C}}
\newcommand{\ee}{\end{equation}}
\newcommand\pa {\partial}
\newcommand{\D}{\mathcal D}
\newcommand \PP {\mathcal P}
\newcommand \LL {\mathcal L}
\newcommand \U{\mathcal U} 
\newcommand \V{\mathcal V}
\newcommand \BB {\mathcal B}
\newcommand \fD {\mathfrak D}
\newcommand \I {\mathcal I}
\newcommand \HD {\mathcal HD} 
\newcommand \mO {\mathcal O}
\newcommand {\cM}{\mathcal M}
\begin{document}
          \numberwithin{equation}{section}

          \title
         [Introducing isodynamic points for binary forms and their ratios]{Introducing isodynamic points for binary forms and their ratios}

\author[Ch.~H\"agg]{Christian H\"agg}
\address{Department of Mathematics, Stockholm University, SE-106 91
Stockholm,   Sweden}
\email{hagg@math.su.se}

 \author[B.~Shapiro]{Boris Shapiro}
\address{Department of Mathematics, Stockholm University, SE-106 91
Stockholm,         Sweden and Institute for Information Transmission Problems, Moscow, Russia}
\email {shapiro@math.su.se }

 \author[M.~Shapiro]{Michael Shapiro}
\address{Department of Mathematics, Michigan State University, East Lansing, MI 48824-1027, USA  and National Research University Higher School of Economics, Moscow, Russia}
\email {mshapiro@math.msu.edu}

\begin{abstract}  The isodynamic points  of a plane triangle are known to be the only pair of its centers   invariant under the action of the M\"obius group  $\cM$ on the set of   triangles, \cite{Ki}. Generalizing this  classical result,   
 we introduce below the \emph{isodynamic} map associating  to  a  univariate polynomial  of degree $d\ge 3$ with  at most double roots a polynomial of degree (at most)  $2d-4$ such that this map commutes with the   action of the M\"obius group $\cM$  on the zero loci of the initial polynomial and its image.  The roots of the image polynomial will be called the {\it isodynamic points} of the preimage polynomial. Our construction naturally extends from univariate polynomials  to  binary forms  and  further  to their ratios. 
\end{abstract}

\dedicatory{To Morris Marden, for his contributions to geometry of polynomials}

\maketitle

\section{Introduction}  \label{sec:intro}


One of the classical  problems about triangles in the Euclidean plane is to find all points such that the distances to the vertices are inversely proportional to the lengths of the opposite sides. There are typically two such points called the \textit{first and second isodynamic points} of the triangle under consideration.  (Every equilateral triangle however has just one isodynamic point; the second one can be thought as lying at infinity.)  
  An elementary construction of the isodynamic points using Apollonian circles of a triangle can be found in Figure \ref{classicalIsoPts1} below.\footnote{As we mentioned in the abstract, the isodynamic points can be also described as unique triangle centers  which are invariant under M\"obius transformations.  The formal definition of triangle  centers as well as more information about classical isodynamic points can be found  in the Encyclopedia of Triangles Centers, \cite{Ki}
 and in \S~\ref{sec:classical}.}   
 
 Besides the invariance of the isodynamic points under the action of the M\"obius group on the vertices of triangles, see the footnote, a large number of  their intriguing  properties 
 can be found in two recent publications \cite{Pa,Ra} and references therein.  An interesting old source of information about the isodynamic points is the dissertation \cite{Mo}. 
 
 







 \medskip 
 As we will see below,  the classical geometric recipe  associating to  a plane triangle its isodynamic point(s) can be substituted by  a pleasant explicit formula  \eqref{eq:isoTriPolynomial} associating  to the unique monic cubic polynomial whose roots are the vertices of the triangle an appropriate polynomial of degree at most two vanishing at its isodynamic point(s). This formula seems to be missing in the existing literature.   
 

\medskip
 Inspired by the latter observations,   we consider the following  question. 

\begin{problem}\label{prob:main} {\rm Find a (natural) map from an open and dense subset of complex-valued monic polynomials of a given degree $d\ge 3$  to some space of univariate polynomials  which commutes with the action of the M\"obius group $\cM\simeq PGL_2(\bC)$ on the respective zero loci of  the preimages and of the images.}  
\end{problem}  

\begin{remark}  {\rm Observe that the M\"obius group $\cM$ does not quite act on the space of monic polynomials or on their zero loci  since a M\"obius transformation typically has a pole in $\bC$, i.e., it sends one point in $\bC$ to $\infty$. The same difficulty already occurs in case of isodynamic points of triangles. Therefore, to ensure a well-defined action of the M\"obius group $\cM$ and for our Problem~\ref{prob:main}  to be correctly stated, we will later  instead of the space of monic polynomials of a given degree $d$ consider  the space of homogeneous binary forms of the same degree $d$.     
} 
\end{remark}

\begin{figure}[H]
\begin{center}
\usetikzlibrary {calc,intersections,through}
\begin{tikzpicture}
\clip (-4,3.4) rectangle + (10.5,-5.7); 
\definecolor{triangleColor}{RGB}{0,0,0}
\definecolor{circleColor}{RGB}{0,192,0}
\definecolor{sideExtensionColor}{RGB}{155,50,216}
\definecolor{internalBisectorColor}{RGB}{0,0,255}
\definecolor{externalBisectorColor}{RGB}{255,0,0}

\coordinate (A) at (1,0);
\coordinate (B) at (6,0);
\coordinate (C) at (0,3);
\tkzInCenter(A,B,C)
\tkzGetPoint{ic}
\coordinate (U) at (intersection of A--ic and B--C);
\coordinate (V) at (intersection of B--ic and A--C);
\coordinate [label={[shift={(0.23,-0.465)}]$V$}] (W) at (intersection of C--ic and A--B);
\coordinate (ep1) at ($(U)!15cm!(C)$);
\coordinate (ep2) at ($(V)!12cm!(A)$);
\coordinate (ep3) at ($(W)!7cm!(A)$);
\coordinate (ep4) at ($(U)!5cm!(B)$);

\tkzInCenter(A,C,ep3)
\tkzGetPoint{ic1}
\coordinate (ap1) at ($(A)!15cm!(ic1)$);
\tkzInCenter(A,C,ep1)
\tkzGetPoint{ic2}
\coordinate (ap2) at ($(C)!5cm!(ic2)$);
\tkzInCenter(A,B,ep4)
\tkzGetPoint{ic3}
\coordinate (ap3) at ($(B)!10cm!(ic3)$);
\coordinate [label={[shift={(0.23,-0.465)}]$U$}] (isct1) at (intersection of A--ep3 and C--ap2);
\coordinate (isct2) at (intersection of A--ap1 and C--ep1); 
\coordinate (isct3) at (intersection of A--ep2 and B--ap3); 
\tkzCircumCenter(C,isct1,W)\tkzGetPoint{circ1} 
\draw[internalBisectorColor, line width=0.25mm] (A) -- (ic) -- (intersection of A--ic and B--C);
\draw[internalBisectorColor, line width=0.25mm] (B) -- (ic) -- (intersection of B--ic and A--C);
\draw[internalBisectorColor, line width=0.25mm] (C) -- (ic) -- (intersection of C--ic and A--B);
\draw[color=sideExtensionColor,line width=0.25mm] (C) -- (isct2);
\draw[color=sideExtensionColor,line width=0.25mm] (A) -- (isct3);
\draw[color=sideExtensionColor,line width=0.25mm] (A) -- (isct1);

\draw [externalBisectorColor, line width=0.25mm] (A) -- (isct2); 
\draw [externalBisectorColor, line width=0.25mm] (C) -- (isct1); 
\draw [externalBisectorColor, line width=0.25mm] (B) -- (isct3); 

\tkzDefCircle[circum](C,isct1,W)
\tkzGetPoint{circ1} \tkzGetLength{rayOne}
\tkzDrawCircle[R,color=circleColor,line width=0.25mm](circ1,\rayOne pt)
\tkzDefCircle[circum](A,isct2,U)
\tkzGetPoint{circ2} \tkzGetLength{rayTwo}
\tkzDrawCircle[R,color=circleColor,line width=0.25mm](circ2,\rayTwo pt)
\tkzDefCircle[circum](B,isct3,V)
\tkzGetPoint{circ3} \tkzGetLength{rayThree}
\tkzDrawCircle[R,color=circleColor,line width=0.25mm](circ3,\rayThree pt)
\draw [color=triangleColor, line width=0.5mm] (A) -- (B) -- (C) -- cycle;

\tkzInterCC[R](circ1,\rayOne pt)(circ2,\rayTwo pt) \tkzGetPoints{temp1}{temp2}
\coordinate [label={[shift={(-0.37,-0.12)}]$S'$}] (Iso1) at (temp1);
\coordinate [label={[shift={(-0.35,-0.065)}]$S$}] (Iso2) at (temp2);
\tkzDrawPoint[size=7](Iso1)
\tkzDrawPoint[size=7](Iso2)
\tkzDrawPoint[color=black,fill=white,size=5](U);
\tkzDrawPoint[color=black,fill=white,size=5](V);
\tkzDrawPoint[color=black,fill=white,size=5](W);
\tkzDrawPoint[color=black,fill=white,size=5](isct1);
\tkzDrawPoint[color=black,fill=white,size=5](isct2);
\tkzDrawPoint[color=black,fill=white,size=5](isct3);
\coordinate [label={[shift={(-0.35,-0.40)}]$B$}] (Afix) at (A);
\coordinate [label=right:$C$] (Bfix) at (B);
\coordinate [label=above:$A$] (Cfix) at (C);
\end{tikzpicture}
\caption{Let $U$ and $V$ be the points on sideline $BC$ met by the exterior and interior bisectors of angle $A$. The circle having diameter $UV$ is called the {\it $A$-Apollonian circle}. The $B$- and $C$- Apollonian circles are constructed similarly. Each circle passes through one vertex and both isodynamic points $S$ and $S'$, see \cite{Ki}.}
\label{classicalIsoPts1}
\end{center}
\end{figure}
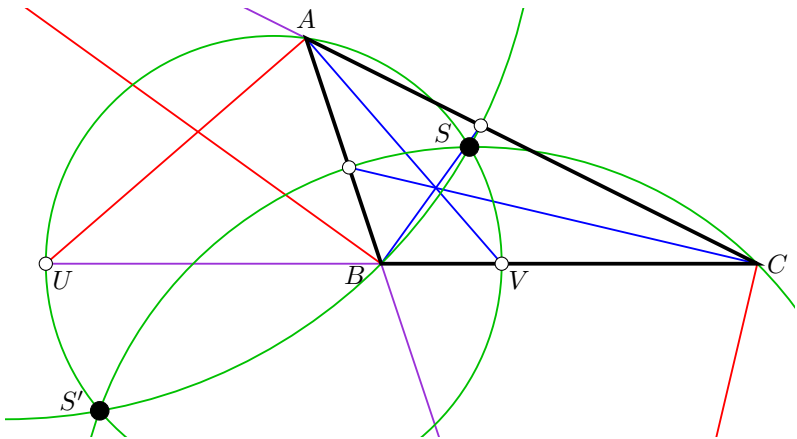  

In what follows, we present one non-trivial solution to Problem~\ref{prob:main} which for $d=3$ sends cubic monic polynomials to polynomials of degree at most two and which,  on the level of their zero loci, associates to a triple of points in the Euclidean plane the above classical isodynamic point(s) of their convex hull, see Proposition~\ref{lm:isodynamicConnection}. We will call this map \emph{isodynamic}. 

\smallskip

More exactly,  denote by $Pol_d$ the affine space of monic complex-valued polynomials of degree $d$  and by $Pol_d^\star\subset Pol_d$ its subset consisting of monic polynomials with roots of multiplicity at most $2$. Further denote by $\PP_n$ the linear space of all complex-valued univariate polynomials of degree at most $n$.   (Observe that $\PP_n$ is isomorphic to the linear space $\BB_n$ of  binary forms of degree $n$.)  Denote by $\BB_n^\star\subset \BB_n$ the subset of homogeneous binary forms with roots of multiplicity at most $2$ (considered as points in $\bC P^1$).    
 
 \smallskip
Given a monic polynomial $P(z)$ of a given degree $d$, consider its \emph{polar derivative}\footnote{Polar derivative has been  already considered by E.~Laguerre and G.~P\'olya jointly with G.~Szeg\"o, who used different terminology.  A nice survey of its properties is Ch.~3 of the famous treatise \cite{Ma}. One the most recent application of polar derivative to geometry of polynomials can be found  in \cite{SeSe}.}
\begin{equation}\label{eq:polar}
\D(z,u)=  d \, P(z)+(u-z) P^\prime(z).
\end{equation}
  Note that $\D(z,u)$ is a bivariate polynomial in $(z,u)$ which is linear non-homogeneous in the variable  $u$ and has degree at most $d-1$ in $z$. The \emph{(univariate) isodynamic map} 
  \begin{equation}\label{eq:isodyn} 
  ID_d: Pol_d^\star \to \PP_{2d-4}
  \end{equation} 
    sends  a monic polynomial $P(z)$  of degree $d$ with  roots of multiplicity at most $2$ to the polynomial $ID_d(P)$ in the variable $u$ given by   
\begin{equation}\label{eq:formula}
ID_d(P) :=\mathrm{Discr} (\D(z,u),z).
\end{equation}
 In other words, $ID_d$ sends $P(z)$ to the discriminant of its polar derivative $\D(z,u)$ with respect to  the variable $z$. The roots of $ID_d(P)$ are, by definition, the {\it isodynamic points} of $P(z)$.
 
\begin{remark}  {\rm 
Recall that for a bivariate polynomial $U(z,u)$,  its discriminant  $\mathrm{Discr} (U(z,u),z)$ with respect to the variable $z$  is defined as follows. Assume that $U(z,u)=a_0(u)z^n+~\dots+a_n(u)$ has degree $n$ in the variable $z$. Then 
$$\mathrm{Discr} (U(z,u),z):= \frac{(-1)^{\binom {n}{2}}} {a_0(u)} \mathrm {Resultant}( U(z,u), U^\prime_z(z,u), z)$$ 
where $\mathrm{Resultant}$ stands for the standard resultant of two polynomials, see e.g. \cite {GKZ}. A similar definition is valid for polynomials in any number of variables.}
\end{remark}

Further let $\I_P$ be the zero locus of $ID_d(P)$ considered as a divisor of degree at most $2d-4$ in the complex plane of the variable $u$. 
 We will call 
$\I_P$ the \emph{(affine) isodynamic  divisor}  and its points the \emph{(affine) isodynamic points}  of $P(z)$.  (In Figure~\ref{classicalIsoPts1} the isodynamic points are shown in the same plane as the zeros of a cubic polynomial $P(z)$.) 

\medskip
We can also formulate a slightly different  recipe of obtaining the isodynamic divisor $\I_P$. Namely, given a polynomial $P(z)=z^d+a_1z^{d-1}+a_2z^{d-2}+\dots + a_d$ of degree $d$ as above, consider the rational function 
\begin{equation}\label{eq:ass1}
R_P(z):=z-\frac{d \, P(z)}{P^\prime(z)}=-\frac{a_1z^{d-1}+2a_2z^{d-2}+\dots + (d-1)a_{d-1}z+da_d}{dz^{d-1}+(d-1)a_1z^{d-2}+\dots + 2a_{d-2}z+a_{d-1}}
\end{equation}
 which we call the \emph{associated rational function}  of $P$ and take the divisor of the critical 
values of $R_P(z)$. One can  show that the latter divisor coincides with $\I_P$, i.e. the critical values of  $R_P(z)$ are the isodynamic points of $P(z)$, see Lemma~\ref{lm:triv} below. Thus the map \eqref{eq:isodyn}   can be thought as a version of the \emph{Lyashko-Looijenga map} sending a monic polynomial of degree $d$  with simple roots  to the univariate polynomial whose zero locus is the set of all critical values of its associated rational function, comp. \cite{Ar}.

\begin{remark}  {\rm There might exist natural extensions of the map $ID_d$ to the space of all polynomials of a fixed degree or another appropriate compactification of $Pol_d^\star$, but we do not pursue this direction of study here.}  
\end{remark} 

\begin{remark}{\rm One intriguing detail of our construction is that the associated rational function  $R_P(z)$ is exactly the one used  in  the  relaxed Newton method for finding roots of $P(z)$ for the relaxation parameter equal to $d$, see e.g. \cite{Gi}. However in our set-up we are interested in the critical values $R_P(z)$ and not in its intersection points with the diagonal line one typically  approaches using  iterations of $R_P(z)$. At the moment we do not see any obvious connection of our problem to complex dynamics of the relaxed Newton method although such connection is quite plausible.  
}
\end{remark}

Let us now modify the above construction to accommodate the case of binary forms and to ensure a well-defined action of the M\"obius group $\cM$ on the preimage and the image spaces.  Given $P(z)$ as above, define its homogenization $P(x,y):= y^d P(\frac {x}{y})$, where $z=\frac{x}{y}$.  
 Using this notation,  in the homogeneous coordinates $(x: y)$ on $\bC P^1$ one has the following alternative expression for the homogenization of the associated rational function 
\begin{equation}\label{eq:ass2}
R_P\left(x,y \right)= - \frac{P^\prime_y(x,y)}{P^\prime_x(x,y)}, 
\end{equation} 
where $R_P\left(x,y \right)$ is obtained by the homogenization of the enumerator and denominator of the above $R_P(z)$, 
see Lemma~\ref{lm:triv}(ii). Noticing that $R_P\left(x,y \right)$ has the same degrees of its numerator and denominator and therefore defines a rational function on $\bC P^1$, 
  we can introduce the \emph{(bivariate) isodynamic map}   
\begin{equation}\label{eq:IDH}
 ID_d: Proj(\BB_d^\star) \to Proj(\BB_{2d-4}) 
\end{equation}
sending a binary form $P(x,y)\in \BB_d^\star$ (considered up to a constant factor) to the binary form $ID_d^P$ (considered up to a constant factor) whose projectivized zero locus is the set of critical values of the rational function
$-\frac{P^\prime_y(x,y)}{P^\prime_x(x,y)}$.  The latter set of critical values is \emph{(projective) isodynamic divisor} and its points are \emph{(projective) isodynamic points}  of $P(x,y)$. As above $\BB_d^\star$ is the space of binary forms of degree $d$ with roots of multiplicity at most $2$ on $\bC P^1$.  

\medskip 
 Our first result is as follows.

\begin{theorem}\label{th:main} The natural action of the M\"obius group  $\cM$ on $Proj(\BB_d^\star)$ and  $Proj(\BB_{2d-4})$ commutes with the bivariate isodynamic map $ID_d$.
\end{theorem}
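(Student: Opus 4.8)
The plan is to reduce everything to a single conjugation identity for the associated rational map and then read off the behaviour of its critical values. Fix $g\in\cM\cong PGL_2(\bC)$ and represent it by $A\in SL_2(\bC)$; write a point of $\bC P^1$ as $[v]$ with $v=(x,y)^{T}\in\bC^2\setminus\{0\}$. Then $g$ acts on $\bC P^1$ by $[v]\mapsto[Av]$ and on $Proj(\BB_n)$ by $[Q]\mapsto[Q\circ A^{-1}]$ (linear substitution of the coordinates). With these conventions the zero divisor of $Q\circ A^{-1}$ is the $g$-image of the zero divisor of $Q$, multiplicities included; in particular the action preserves $Proj(\BB_d^\star)$, and on zero divisors it is the tautological ``natural'' action of $\cM$ on configurations of points of $\bC P^1$.

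The first step is to recast the associated rational function in coordinate-free form. Let $J=\left(\begin{smallmatrix}0&1\\-1&0\end{smallmatrix}\right)$ and let $\nabla P=(P'_x,P'_y)^{T}$ be the gradient of $P$, a column vector of two binary forms of degree $d-1$. Then formula \eqref{eq:ass2} says precisely that the morphism $R_P\colon\bC P^1\to\bC P^1$ appearing in \eqref{eq:IDH} is $[v]\mapsto[\,J\,\nabla P(v)\,]$, the overall sign being invisible on $\bC P^1$. (When $P$ has double roots, $P'_x$ and $P'_y$ share the linear factors cutting them out, so the degree of $R_P$, and hence the number of its critical values, may drop below $2d-4$; this is harmless for us, since having a double root is an $\cM$-invariant condition.)

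The central step is the equivariance
\[
R_{\,g\cdot P}\;=\;g\circ R_P\circ g^{-1}\qquad(g\in\cM),
\]
which follows from a one-line computation. By the chain rule $\nabla(P\circ A^{-1})(v)=(A^{-1})^{T}\nabla P(A^{-1}v)$, and by the elementary identity $AJA^{T}=(\det A)\,J$ --- valid for every $2\times2$ matrix $A$, and equivalent to $J\,(A^{-1})^{T}=(\det A)^{-1}A\,J$ --- we get
\[
J\,\nabla(P\circ A^{-1})(v)=(\det A)^{-1}\,A\,J\,\nabla P(A^{-1}v).
\]
Passing to $\bC P^1$ kills the scalar $(\det A)^{-1}$ and gives $R_{g\cdot P}([v])=[\,A\,J\,\nabla P(A^{-1}v)\,]=g\bigl(R_P(g^{-1}[v])\bigr)$, as asserted. (It is exactly the matching of the relaxation parameter with $\deg P$ that makes $R_P$ equivariant in this sense; for other parameters the identity fails.)

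It remains to pass from $R_P$ to its critical values. Since $g$ and $g^{-1}$ are automorphisms of $\bC P^1$, hence unramified everywhere, the composite $g\circ R_P\circ g^{-1}$ has the same local degree at $g(w)$ as $R_P$ has at $w$, and it sends $g(w)$ to $g(R_P(w))$; therefore its set of critical values (with their multiplicities) is the $g$-image of that of $R_P$. By the definition of the bivariate isodynamic map, $ID_d^P\in Proj(\BB_{2d-4})$ is, up to a constant factor, the binary form whose projectivized zero divisor is this critical-value divisor of $R_P$; since a binary form of fixed degree is determined up to a constant by its zero divisor, and $g$ acts on $\BB_{2d-4}$ precisely by the natural action on zero divisors, we conclude $ID_d(g\cdot[P])=g\cdot ID_d([P])$, which is the assertion of Theorem~\ref{th:main}. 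I expect the only genuinely delicate point to be the bookkeeping in the coordinate-free step: one has to make sure that $[v]\mapsto[J\nabla P(v)]$ is literally an equality of \emph{morphisms} $\bC P^1\to\bC P^1$, valid also at the poles and zeros of $-P'_y/P'_x$ and at $[1:0]$, so that the conjugation identity is an honest identity of maps rather than of rational expressions; the homogeneous form \eqref{eq:ass2} is precisely what makes this unambiguous. (Alternatively, one can argue directly from \eqref{eq:formula}, using that $\mathrm{Discr}(\,\cdot\,,z)$ of the polar derivative transforms in the required way under the linear substitution defining the $\cM$-action, together with Lemma~\ref{lm:triv}.)
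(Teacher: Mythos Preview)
Your proof is correct and follows the same strategy as the paper: establish the conjugation identity $R_{g\cdot P}=g\circ R_P\circ g^{-1}$ via the chain rule and then transport the critical-value divisor. The paper carries out the computation in affine notation, writing the linear change $u=\alpha x+\beta y,\ v=\gamma x+\delta y$ and verifying directly that $-W'_v/W'_u=(\alpha R_W+\beta)/(\gamma R_W+\delta)$; your version packages the same calculation with the symplectic matrix $J$ and the identity $AJA^{T}=(\det A)J$, which is a pleasant repackaging but not a genuinely different route.
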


Note that binary forms of degree $d$ can be thought of as holomorphic sections of the sheaf $\mO(d)$ on $\bC P^1$; this circumstance  sparkles the idea that the above  construction of isodynamic map might extend to meromorphic sections of  $\mO(d)$ as well, i.e. to the ratios of binary forms $\frac{P(x,y)}{Q(x,y)}$, where $\deg P-\deg Q=d$. And indeed, such an extension exists.

\medskip
Namely,   consider the space $B_{d+\pa,\pa}^\star$ of pairs of binary forms $(P(x,y), Q(x,y))$ where $P$ and $Q$ are coprime, both $P$ and $Q$ have roots of multiplicity at most $2$, $\deg P=d+\pa,\; \deg Q=\pa,$ $d>0,$ and $\pa\ge 0$.  We can interpret the pair $(P(x,y), Q(x,y))$ as the bivariate rational function $W(x,y)=\frac{P(x,y)}{Q(x,y)}$. (Note that $W(x,y)$ is  not a rational function on $\bC P^1$ in the usual sense, but is naturally a meromorphic section of $\mO(d)$.)  We can associate to $W(x,y)$ its divisor $D(W)$ of degree $d$ whose positive part, i.e. divisor  of zeros $D^0(W)$ has degree $d+\pa$ and whose negative part, i.e.  divisor  of poles $D_\infty(W)$ has degree $-\pa$. 

Next, let us define the associated rational function of $W$  as 
\begin{equation}\label{eq:asshomrat}
R_{W}(x,y)=-\frac {W^\prime_y(x,y)}{W^\prime_x(x,y)}.
\end{equation}
We will show  that $R_{W}(x,y)$ has  the same degree of its denominator  $W^\prime_x(x,y)$ as of its numerator $W^\prime_y(x,y)$ implying that it is a well-defined rational function on $\bC P^1$ or, equivalently, a meromorphic section of $\mO(0)$.  Namely, for generic binary forms $P(x,y)$ and $Q(x,y)$ as above  the numerator and denominator  both have degrees equal to $2d+4\pa -4$. This circumstance explains why although  formula \eqref{eq:asshomrat} looks the same for all meromorphic sections of $\mO(d)$ it makes sense not only to fix $d,$ but also  the degree $\pa$ of the pole divisor.  

Let us finally associate to the divisor $D(W)$ the positive divisor $D(R_W)$  of all critical values of the associated rational function $R_W$. 


\smallskip
Formula \eqref{eq:asshomrat}   can also be rewritten in a way similar to  \eqref{eq:ass1}. Indeed, restriction of the above section $W(x,y)$ of $\mO(d)$ to the local chart $y=1$ on $\bC P^1$ can be identified with the univariate rational function  
$$w(z)=\frac{p(z)}{q(z)}$$ 
where $p(t)=P(t,1)$ and $q(t)=Q(t,1)$. Typically, $\deg p(t)=\deg P(x,y)=d+\pa$ and $\deg q(t)=\deg Q(x,y)=\pa$. Moreover we can define the associated (univariate)  rational function 
$$R_w(z):=z-\frac{d\, w(z)}{w^\prime(z)}=z-\frac{d\, p(z)q(z)}{p^\prime(z)q(z)-p(z)q^\prime(z)}.$$
One can easily see that $R_w(z)=R_W(x,y)\vert_{y=1}$  and its set of critical values can be defined similarly to \eqref{eq:polar}.

 Namely, for $w(z)=\frac{p(z)}{q(z)}$  with coprime $p(z)$ and $q(z)$ having  roots of multiplicity at most $2$ such that the degree $\deg p=d+\pa$ and $\deg q=\pa$ where $d\ge 1$, define its polar derivative as 
\begin{equation}\label{eq:ratPolarDerivative}
\D(z,u)=d\, w(z)+(u-z)w^\prime(z).
\end{equation}
The numerator of $\D(z,u)$ is given by the expression 
\begin{equation}  \label{eq:resurat}
N\D(z,u)=d \, p(z)q(z)+(u-z)(p^\prime(z) q(z) -p(z) q^\prime(z)).
\end{equation}
We define the \emph{(univariate) rational isodynamic map} 
\begin{equation}\label{eq:map}
ID_{d,\pa}: Rat_{d+\pa,\pa}^\star \to \PP_{2d+4\pa-4}
\end{equation} 
 as sending  a rational function $w(z)$ to the polynomial given by   
\begin{equation}\label{eq:funct}
ID_{d,\pa}(u) := \mathrm{Discr}(N\D(z,u),  z).
\end{equation} 
Here $Rat_{d+\pa,\pa}^\star$  is the space of rational functions $w=\frac{p}{q}$ with coprime pairs of polynomials $(p,q)$ having only roots of multiplicity at most $2$ , $\deg p=d+\pa$ and $\deg q=\pa$ for $\pa\ge 1$. If $\pa=0$ then 
$Rat_{d,0}^\star=Pol^\star_d.$ The fact that $ID_{d,\pa}$ is well-defined will be  proven in Proposition~\ref{lm:defi} below. 

\smallskip
Now define the \emph{rational isodynamic divisor } $\I_w$ of the rational function $w$  as the zero divisor of $ID_{d,\pa}(w)$. 
Similarly to the above, we can show that   $\I_w$ is the divisor of the criticial values of $R_w(z)$. 


\begin{theorem}\label{th:mainRat} In the above notation, the action of the M\"obius group  $\cM\simeq PGL_2(\bC)$ on $Rat_{d+\pa,\pa}^\star$ and $\PP_{2d+4\pa-4}$ commutes with the operation of taking the divisor of isodynamic points, i.e. with the map $ID_{d,\pa}$. 
\end{theorem}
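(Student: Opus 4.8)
The plan is to reduce the statement, via the characterization of the rational isodynamic divisor $\I_w$ as the divisor of critical values of the associated rational function, to a single equivariance property: passing from $w$ to its associated rational function turns the $\cM$-action on the source into \emph{conjugation} by the corresponding element of $PGL_2(\bC)$ on $\bC P^1$. Once this is established the conclusion is immediate, since a Möbius change of coordinates neither creates nor destroys ramification — it merely transports critical points and critical values.

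\emph{Step 1 (reduction to critical values).} By the discussion preceding the theorem — the ratio-of-forms analogue of Lemma~\ref{lm:triv}, together with Proposition~\ref{lm:defi} — the zero divisor of $ID_{d,\pa}(w)=\mathrm{Discr}(N\D(z,u),z)$ coincides with the divisor of critical values of the associated rational function, which in homogeneous coordinates is the well-defined holomorphic self-map
\[
 R_W(x,y)\;=\;-\,\frac{W^\prime_y(x,y)}{W^\prime_x(x,y)},\qquad W=\frac{P}{Q},
\]
of $\bC P^1$. Thus it suffices to prove that for every $\gamma\in\cM=PGL_2(\bC)$ the divisor of critical values of $R_{\gamma\cdot W}$ is the $\gamma$-image of the divisor of critical values of $R_W$; here $\gamma\cdot W$ denotes the meromorphic section of $\mO(d)$ obtained from $W$ by the natural action of $\gamma$ on the binary forms $P$ and $Q$.

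\emph{Step 2 (equivariance of the associated rational function).} I claim that $R_{\gamma\cdot W}=\gamma\circ R_W\circ\gamma^{-1}$ on $\bC P^1$. Pick a linear representative $\gamma\in GL_2(\bC)$ and write the action as $(\gamma\cdot W)(v)=W(\gamma^{-1}v)$, $v=(x,y)$. By the chain rule each of $(\gamma\cdot W)^\prime_x$ and $(\gamma\cdot W)^\prime_y$ is a fixed linear combination — with coefficients the entries of $\gamma^{-1}$ — of $W^\prime_x\circ\gamma^{-1}$ and $W^\prime_y\circ\gamma^{-1}$; forming the ratio $-\,(\gamma\cdot W)^\prime_y/(\gamma\cdot W)^\prime_x$ and dividing numerator and denominator by $W^\prime_x\circ\gamma^{-1}$ then exhibits $R_{\gamma\cdot W}$ as the image of $R_W\circ\gamma^{-1}$ under the fractional-linear map whose matrix is the adjugate of $\gamma^{-1}$, i.e. $\det(\gamma^{-1})\,\gamma$, which represents $\gamma$ in $PGL_2(\bC)$. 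This is precisely the computation behind Theorem~\ref{th:main}, carried out now for meromorphic rather than holomorphic sections of $\mO(d)$.

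\emph{Step 3 (conclusion and the main point).} Since $\gamma$ and $\gamma^{-1}$ are biholomorphic automorphisms of $\bC P^1$, the ramification index of $\gamma\circ R_W\circ\gamma^{-1}$ at a point $q$ equals that of $R_W$ at $\gamma^{-1}(q)$, and the value it takes there is $\gamma\big(R_W(\gamma^{-1}(q))\big)$; consequently the divisor of critical values of $R_{\gamma\cdot W}$ equals $\gamma_{*}$ of the divisor of critical values of $R_W$, multiplicities included. By Step~1 this says $\I_{\gamma\cdot w}=\gamma_{*}\I_w$, so $ID_{d,\pa}(\gamma\cdot w)$ and $\gamma\cdot ID_{d,\pa}(w)$ have the same zero divisor on $\bC P^1$ and therefore agree up to a nonzero scalar, which is the assertion of the theorem. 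I expect the only delicate issues to lie not in this equivariance but in the well-definedness packaged into Step~1: one must know that the $\cM$-action preserves $Rat^\star_{d+\pa,\pa}$ and $\PP_{2d+4\pa-4}$ — coprimality of $(P,Q)$, the bound $2$ on root multiplicities, and the degrees $\deg P=d+\pa$, $\deg Q=\pa$ are all invariant under an invertible linear substitution in $(x,y)$ — and that $R_W$ has numerator and denominator of equal degree so that conjugating it by a Möbius transformation again produces an honest self-map of $\bC P^1$; both are furnished by Proposition~\ref{lm:defi}.
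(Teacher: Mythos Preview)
Your proof is correct and follows essentially the same route as the paper: both arguments use the chain rule to show that under a linear change of the homogeneous variables the associated rational function $R_W=-W'_y/W'_x$ transforms by the corresponding M\"obius map on its values (your conjugation formula $R_{\gamma\cdot W}=\gamma\circ R_W\circ\gamma^{-1}$ is exactly the paper's identity $R_W(u,v)=\frac{\alpha R_W(x,y)+\beta}{\gamma R_W(x,y)+\delta}$), and then conclude that the critical-value divisor is carried along. Your Steps~1 and~3 make explicit what the paper's last sentence leaves implicit, but no new idea is involved.
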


The structure of the paper is as follows. In \S~\ref{sec:proofs} we settle the above mentioned results and discuss several properties of isodynamic maps and their discriminants.  In \S~\ref{sec:ex} we present several explicit formulas for the isodynamic maps for polynomials and rational functions in low degree cases. In \S~\ref{sec:classical} we recall the construction of the classical isodynamic points of triangles and show that they fit as a special case of our general framework.  Finally, in \S~\ref{sec:final} we indicate some  further possible directions of study.

\medskip
\noindent
\emph{Acknowledgements.} The second author wants to thank  Hr.~Sendov for sending him the paper \cite{SeSe}.  He also wants to acknowledge the financial support of his research provided by the grant 2021-04900 of the Swedish Research Council.  Research of the third author was supported by the  NSF grant DMS- 2100791.

\section{Proofs of  basic results and some properties of isodynamic maps} \label{sec:proofs}  

\subsection{Proofs of Theorems \ref{th:main} and \ref{th:mainRat}} 
We start with the following statement.  As above set $w(z)=\frac{p(z)}{q(z)}$ with coprime $p$ and $q$ having roots of multiplicity at most $2$ and $\deg p=d+\pa$, $\deg q=\pa$, $d\ge 1$. Further, let $R_w(z)= z-\frac {d\, w(z)}{w^\prime(z)}$ and $W(x,y)=\frac {P(x,y)}{Q(x,y)}$, where $P(x,y)=y^{d+\pa}p\left(\frac {x}{y}\right)$ and $Q(x,y)=y^{\pa}q\left(\frac {x}{y}\right)$.  Finally, set $\cD(z,u)=d\, w(z)+(u-z)w^\prime(z)$.

\begin{lemma}\label{lm:triv} In the above notation, 

\noindent
{\rm (i)} $u^\ast$ is a value of the variable $u$ such that the polar derivative $\cD(z,u^\ast)$ has a multiple root in $z$ if and only if $u^\ast$ is a critical value of $R_w(z)$;

\noindent
{\rm(ii)}   the associated rational function $R_W(x,y)$ obtained by homogenizing the numerator and denominator of $R_w(z)$ coincides with $-\frac{W_y^\prime(x,y)}{W_x^\prime(x,y)}$. 
\end{lemma}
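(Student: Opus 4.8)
The plan is to treat the two parts separately, starting with the more concrete computational part (ii) and then deducing (i) from a critical-point analysis. For part (ii), I would simply compute. Write $w(z) = p(z)/q(z)$ and recall $W(x,y) = y^{d+\pa}p(x/y)/\bigl(y^{\pa}q(x/y)\bigr) = y^{d} w(x/y)$. Apply the chain rule with $z = x/y$: one gets $W'_x(x,y) = y^{d-1} w'(x/y)$ and $W'_y(x,y) = d\,y^{d-1} w(x/y) - x y^{d-2} w'(x/y) = y^{d-2}\bigl(d\,y\,w(x/y) - x\,w'(x/y)\bigr)$. Hence
\begin{equation*}
-\frac{W'_y(x,y)}{W'_x(x,y)} = -\frac{d\,y\,w(x/y) - x\,w'(x/y)}{y\,w'(x/y)} = \frac{x}{y} - \frac{d\,w(x/y)}{w'(x/y)},
\end{equation*}
which, after substituting $z = x/y$ and clearing the denominators of $w$ (i.e. multiplying numerator and denominator by $q(z)^2$ and using $w' = (p'q - pq')/q^2$), is exactly the homogenization of $R_w(z) = z - d\,w(z)/w'(z)$. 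So the only care needed is to check that the homogenization degrees of numerator and denominator match and that no spurious common factor of $y$ is introduced; this is a matter of bookkeeping the degrees $d+\pa$ and $\pa$.

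For part (i), I would argue that $u$ is a critical value of $R_w$ exactly when the fiber $R_w(z) = u$ has a point where $R_w'(z) = 0$, i.e. when $R_w(z) - u$ has a multiple root in $z$. Now observe that $R_w(z) - u = z - \frac{d\,w(z)}{w'(z)} - u = -\frac{1}{w'(z)}\bigl(d\,w(z) + (u - z)w'(z)\bigr) = -\frac{\cD(z,u)}{w'(z)}$. Thus, away from the zeros and poles of $w'$, the function $R_w(z) - u$ and the polar derivative $\cD(z,u)$ have the same zeros with the same multiplicities; in particular $R_w(z) - u$ has a multiple root in $z$ iff $\cD(z,u)$ does, which is the claim. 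The main obstacle — and the only place real care is required — is the behavior at the zeros and poles of $w'(z)$ (and at $z = \infty$): one must check that the roots of $w'$ are not roots of $\cD(z,u)$ for the relevant $u$ (using coprimality of $p,q$ and the multiplicity-at-most-two hypothesis, so that $w'$ and the numerator $N\cD(z,u)$ share no common factor generically), and that passing from the univariate picture to $\bC P^1$ does not create or destroy a critical value at infinity. I expect this to be handled by a short genericity/continuity remark rather than a delicate argument, since both sides are algebraic in $u$ and agree on a dense set.

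Finally, I would note that part (ii) feeds back into making part (i) cleaner: since $R_W(x,y) = -W'_y/W'_x$ is genuinely a rational map $\bC P^1 \to \bC P^1$ of well-defined degree, its critical values form an honest divisor, and the identification $R_w(z) - u = -\cD(z,u)/w'(z)$ shows this divisor is cut out by $\mathrm{Discr}(N\cD(z,u), z)$ up to the factors coming from $w'$, which is precisely what is needed to know that $ID_{d,\pa}$ is well defined (as asserted for Proposition~\ref{lm:defi}). The whole lemma is thus essentially a change-of-coordinates computation plus one genericity check.
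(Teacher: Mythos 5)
Your proposal is correct and follows essentially the same route as the paper: part (ii) is the same chain-rule computation of $W'_x$ and $W'_y$ for $W(x,y)=y^d w(x/y)$, and part (i) rests on the same identity $R_w(z)-u=-\cD(z,u)/w'(z)$ (the paper phrases it as solving $\cD(z^\ast,u^\ast)=0$ for $u^\ast=R_w(z^\ast)$), with the zeros of $w'$ set aside in both treatments. Your explicit flagging of the degenerate locus and of the degree bookkeeping in the homogenization is a reasonable amplification of what the paper leaves implicit, but it is not a different argument.
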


\begin{proof} To settle (i), note that if $u^\ast$ is a value of the variable $u$ for which the polar derivative $\cD(z,u)$ has a multiple root  in the variable $z$ then denoting this root by $z^\ast$ we get 
$$0=d\, w(z^\ast)+(u^\ast-z^\ast)w^\prime(z^\ast)\Leftrightarrow u^\ast=z^\ast-\frac{d \, w(z^\ast)}{w^\prime(z^\ast)}=R_P(z^\ast).$$
Note that the latter expression is well-defined if $w^\prime (z^\ast)\neq 0$. Since $z^\ast$ is a multiple root of $R_w(z)$, $u^\ast$ has to be a critical value of $R_w(z)$ at $z^\ast$. 

To settle (ii), let us rewrite the ratio $-\frac{W^\prime_y}{W^\prime_x}$.  By definition, $W(x,y)=y^d\frac{p\left(\frac{x}{y}\right)}{q\left(\frac{x}{y}\right)}$. 
Thus 
$$W^\prime_x=y^d\frac{\frac{1}{y}\left(p^\prime\left(\frac{x}{y}\right) q\left(\frac{x}{y}\right) - p\left(\frac{x}{y}\right) q^\prime\left(\frac{x}{y}\right)\right)} {q^2\left(\frac{x}{y}\right) }=y^{d-1} 
\frac{\left(p^\prime\left(\frac{x}{y}\right) q\left(\frac{x}{y}\right) - p\left(\frac{x}{y}\right) q^\prime\left(\frac{x}{y}\right)\right)} {q^2\left(\frac{x}{y}\right) }.$$

Similarly, 
$$W^\prime_y= d\cdot y^{d-1}\frac {p\left(\frac{x}{y}\right) }{q\left(\frac{x}{y}\right) }   +\frac {y^d \left(-\frac{x}{y^2}\right) \left(p^\prime\left(\frac{x}{y}\right) q\left(\frac{x}{y}\right) - p\left(\frac{x}{y}\right) q^\prime\left(\frac{x}{y}\right)\right)}
{q^2\left(\frac{x}{y}\right) }.
$$
The above implies
$$-\frac{W^\prime_y}{W^\prime_x}=-\frac{ d \, p\left(\frac{x}{y}\right) q\left(\frac{x}{y}\right)- \frac{x}{y}\left(p^\prime\left(\frac{x}{y}\right) q\left(\frac{x}{y}\right) - p\left(\frac{x}{y}\right) q^\prime\left(\frac{x}{y}\right)\right)}{p^\prime\left(\frac{x}{y}\right) q\left(\frac{x}{y}\right) - p\left(\frac{x}{y}\right) q^\prime\left(\frac{x}{y}\right)}=\frac{x}{y} - \frac{ d \, p\left(\frac{x}{y}\right) q\left(\frac{x}{y}\right)}{p^\prime\left(\frac{x}{y}\right) q\left(\frac{x}{y}\right) - p\left(\frac{x}{y}\right) q^\prime\left(\frac{x}{y}\right)}
$$
$$=z-\frac {d \, p(z)q(z)}{p^\prime(z)q(z)-p(z)q^\prime(z)},$$
where $z=\frac{x}{y}$.
\end{proof}

Since  Theorem~\ref{th:main} is a special case of Theorem~\ref{th:mainRat} we present below only the proof of the latter result. 

\begin{proof}  Using the above notation, set $R_W(x,y)=-\frac{W^\prime_y}{W^\prime_x}$ and make a change of coordinates $u=\alpha x+\beta y;\; v=\gamma x+\delta y$.  Using the chain rule we obtain   
\begin{equation}
\begin{cases}
W^\prime_x= \alpha W^\prime_u + \gamma W^\prime_v \\
W^\prime_y= \beta W^\prime_u + \delta W^\prime_v
\end{cases}    
\Leftrightarrow 
\hskip0.5cm
\begin{cases}
W^\prime_u= \frac{1}{\fD}\left( \delta W^\prime_x - \gamma W^\prime_y \right)\\
W^\prime_v= \frac{1}{\fD}\left( -\beta W^\prime_x + \alpha \frac{\partial}{\partial y}\right),
\end{cases}  
\end{equation}
where $\fD=\alpha \delta -\beta \gamma$. 
Thus  introducing $R_W(u,v)=-\frac{W^\prime_v}{W^\prime_u}$, we obtain
$$R_W(u,v)= - \frac {-\beta W^\prime_x + \alpha W^\prime_y}{\delta W^\prime_x - \gamma W^\prime_y} =  \frac { \alpha \left(\frac{-W^\prime_y}{W^\prime_x}\right) +\beta}{\gamma \left(\frac{-W^\prime_y}{W^\prime_x}\right) +\delta}= \frac { \alpha R_W(x,y) +\beta}{\gamma R_W(x,y) +\delta}. 
$$
Thus the action of the M\"obius group $\cM$ in the homogeneous coordinates $(x:y)$ results in the same action of $\cM$ on the associated rational functions which implies that the locus of critical values experiences the same action. The result follows.   
\end{proof}  

\subsection{Discriminant of the isodynamic map} 
Below we discuss possible  situations when the image of the isodynamic map $ID_{d,\pa}$ is degenerate. Namely,  we describe  

\smallskip
\noindent
(i) for which pairs $(p(z),q(z))$ the map $ID_{d,\pa}$ is well-defined (respectively, for which polynomials $P(z)$ the map $ID_d$ is well-defined);

\smallskip
\noindent
(ii) for which pairs $(p(z),q(z))$ the corresponding divisor $\I_{p,q}$ has degree less than $2d+4\pa-4$ in $\bC$ (respectively, for which polynomials $P(z)$ of degree $d$ the divisor $\I_{P}$ has degree less than $2d-4$);

\smallskip
\noindent
(iii)  for which pairs $(p(z),q(z))$, the corresponding divisor $\I_{p,q}$  has multiple roots (respectively,    for which polynomials $P(z)$ of degree $d$, the corresponding divisor $\I_P$ has multiple roots). 

\medskip
The latter problem can be reformulated as the question of when the associated rational function $R_P(z)$ does not have $2d-4$ distinct critical values. (In \cite{BoSh} dealing with  a similar situation this set  of parameters is called the \emph{Hurwitz discriminant}.) 

\smallskip In order to answer the latter questions we need an additional statement.

\begin{lemma}\label{lm:multroots} A polynomial pencil $f(z,u)=A(z)+uB(z)$ has a multiple root w.r.t the variable $z$ for each value of the variable $u$ if and only if $A(z)$ and $B(z)$ have a common root which has multiplicity at least $2$ for both $A(z)$ and $B(z)$.
\end{lemma}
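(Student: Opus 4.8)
The plan is to prove the equivalence in Lemma~\ref{lm:multroots} by treating the pencil $f(z,u)=A(z)+uB(z)$ as a curve and analyzing its singularities, or more elementarily by working directly with the condition $\mathrm{Discr}(f(z,u),z)\equiv 0$ in $u$.

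\medskip
\noindent\emph{Sufficiency.} First I would do the easy direction: if $A$ and $B$ share a root $z_0$ with multiplicity at least $2$ in each, write $A(z)=(z-z_0)^2\tilde A(z)$ and $B(z)=(z-z_0)^2\tilde B(z)$. Then $f(z,u)=(z-z_0)^2\bigl(\tilde A(z)+u\tilde B(z)\bigr)$, so $z_0$ is visibly a multiple root of $f(\cdot,u)$ for \emph{every} $u$. No real work here.

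\medskip
\noindent\emph{Necessity.} This is the substantive direction and the main obstacle. Suppose $f(\cdot,u)$ has a multiple root for every $u$; equivalently $f$ and $f_z'=A'(z)+uB'(z)$ have a common zero for every $u$, i.e. $\mathrm{Resultant}_z(f,f_z')\equiv 0$ as a polynomial in $u$. My approach would be: for each $u$ in a Zariski-dense set, pick $z(u)$ with $f(z(u),u)=0=f_z'(z(u),u)$. From the two equations $A(z)+uB(z)=0$ and $A'(z)+uB'(z)=0$, eliminate $u$ (assuming $B(z)\not\equiv 0$, the nontrivial case) to get $u=-A(z)/B(z)$ and hence $A'(z)B(z)-A(z)B'(z)=0$ at $z=z(u)$; that is, the Wronskian $\W(z):=A'(z)B(z)-A(z)B'(z)$ vanishes at $z(u)$. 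Since $u\mapsto z(u)$ takes infinitely many values as $u$ varies over an infinite set (because $u=-A(z(u))/B(z(u))$ is determined by $z(u)$, so distinct $u$'s force distinct $z(u)$'s unless $A/B$ is constant — a degenerate subcase to dispatch separately), the polynomial $\W(z)$ has infinitely many roots, hence $\W(z)\equiv 0$. But $\W\equiv 0$ means $A/B$ is a constant $c$ wherever defined, i.e. $A=cB$; then $f(z,u)=(c+u)B(z)$, and for $f(\cdot,u)$ to have a multiple root for all $u$ — in particular for $u\neq -c$ — we need $B(z)$ itself to have a multiple root $z_0$, which is then a common multiple root of $A=cB$ and $B$. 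This handles the case $\W\equiv 0$; I should double-check whether $c=0$ (i.e. $A\equiv 0$) or $B\equiv 0$ are consistent with the hypothesis and, if so, note they trivially give the conclusion (or are excluded).

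\medskip
\noindent The gap I must be careful about is the step ``$u\mapsto z(u)$ takes infinitely many values.'' The cleanest way is: restrict to $u$ avoiding the finitely many bad values (zeros of leading coefficients, values where $B(z(u))=0$, etc.); for such $u$ the relation $u=-A(z(u))/B(z(u))$ holds, so if $z(u_1)=z(u_2)$ then $u_1=u_2$; thus the map is injective on an infinite set and its image is infinite, giving infinitely many roots of $\W$. An alternative, perhaps slicker, route avoiding any choice of $z(u)$: observe that $\mathrm{Resultant}_z(A+uB,\ A'+uB')$, expanded in $u$, has leading and trailing behavior controlled by $\mathrm{Resultant}(B,B')$ and $\mathrm{Resultant}(A,A')$ respectively, and its identical vanishing forces a common factor of $A+uB$ and $A'+uB'$ in $\bC(u)[z]$; by Gauss's lemma this common factor can be taken in $\bC[u][z]$, and since it must divide $A'+uB'=\partial_z(A+uB)$, a standard argument shows the common factor, evaluated appropriately, yields the claimed double common root of $A$ and $B$. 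I would present the elimination-of-$u$/Wronskian argument as the main proof since it is the most transparent, and I expect the only real care needed is the bookkeeping of degenerate subcases ($B\equiv0$, $A\equiv cB$, the finitely many excluded $u$).
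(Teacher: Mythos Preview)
Your Wronskian route is a reasonable idea, but the step you yourself flagged has a real gap: you assert that the ``bad'' values of $u$ --- those for which the chosen multiple root $z(u)$ satisfies $B(z(u))=0$ --- are only finitely many, and this is what allows you to conclude $\W\equiv 0$. That assertion is false in general. Take $A(z)=z^2$ and $B(z)=z^2(z-1)$: then $f(z,u)=z^2\bigl(uz+1-u\bigr)$ has the double root $z=0$ for every $u$, so the hypothesis of the lemma holds, yet the \emph{only} multiple root available is $z(u)=0$, and $B(0)=0$ for all $u$. Here $\W(z)=A'B-AB'=-z^4\not\equiv 0$ and $A$, $B$ are certainly not proportional, so your intended conclusion ``$\W\equiv 0$, hence $A=cB$'' does not follow. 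The repair is to split off precisely this situation before running the Wronskian argument: if for infinitely many $u$ the multiple root $z(u)$ lies among the finitely many zeros of $B$, then by pigeonhole some zero $\zeta$ of $B$ is a multiple root of $f(\cdot,u)$ for infinitely many $u$; the equations $A(\zeta)+uB(\zeta)=0$ and $A'(\zeta)+uB'(\zeta)=0$, holding for infinitely many $u$, then force $A(\zeta)=A'(\zeta)=B'(\zeta)=0$, giving the common double root directly. Only in the complementary case --- infinitely many $u$ with a multiple root $z(u)$ satisfying $B(z(u))\neq 0$ --- does your injectivity argument go through and yield $\W\equiv 0$.

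For comparison, the paper's proof sidesteps this case split with a shorter trick: choose a locally analytic branch $z^\ast(u)$ of the curve $\{f=f_z'=0\}$, differentiate the identity $A(z^\ast(u))+uB(z^\ast(u))=0$ with respect to $u$, and use the second equation $A'(z^\ast)+uB'(z^\ast)=0$ to kill the chain-rule term, leaving $B(z^\ast(u))\equiv 0$. This forces $z^\ast(u)$ to be constant, equal to some root $z^\ast$ of $B$; then the two equations, now holding for all $u$ with $z^\ast$ fixed, immediately give $A(z^\ast)=A'(z^\ast)=B'(z^\ast)=0$. No Wronskian and no bifurcation are needed.
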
 

\begin{proof} Denote by $z^\ast(u)$ some multiple root of the pencil $f(z,u)$ for a given value of the variable $u$. It has to satisfy the following system of algebraic equations: 
$$\begin{cases} f(z^\ast(u),u)=0\\
\frac{\partial f}{\partial z} (z^\ast(u),u)=0 
\end{cases} 
 \leftrightarrow \quad 
\begin{cases} A(z^\ast(u))+uB(z^\ast(u))=0\\
A^\prime(z^\ast(u))+uB^\prime(z^\ast(u))=0. 
\end{cases} 
$$
(By the assumptions of the lemma, the set of multiple roots $z^\ast(u)$ is a complex algebraic curve in the space $\bC^2$ with coordinates $(u,z)$ which projects surjectively on the $u$-axis.)
Differentiating the first equation in the latter system w.r.t the variable $u$ we get
$$B(z^\ast(u))+\left(A^\prime(z^\ast(u))+uB^\prime(z^\ast(u))\right)\frac{dz^\ast(u)}{du}=0.$$
Using the second equation from the latter system we conclude that $B(z^\ast(u))=0$ which implies that $z^\ast(u)=z^\ast$ is a constant and this constant is a root of $B(z)$. Thus $z^\ast$ is also a root of $A(z)$. Finally, by our assumptions 
$z^\ast$ is a multiple root of both $A(z)$ and $B(z)$. 
\end{proof} 

\medskip
The following claim now answers the above question (i). 

\begin{proposition}\label{lm:defi}
The map $ID_{d+\pa,\pa}$ applied to a rational function $w(z)=\frac{p(z)}{q(z)}$, $\deg p=d+\pa$, $\deg q=\pa$ given by \eqref{eq:funct} is well-defined if and only if 

either 

\noindent
{\rm(*)}  $\pa>0$,  and the polynomials $p, q$ are coprime and have roots of multiplicity at most $2$;

or 

\noindent
{\rm(**)}  $\pa=0$ {\rm (i.e. $q\equiv 1$)},  and all roots of the polynomial $P$ have  multiplicity at most $2$.
\end{proposition}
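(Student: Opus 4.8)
The plan is to unwind the definition \eqref{eq:funct} and track exactly when it degenerates. Recall $ID_{d,\pa}(u) = \mathrm{Discr}(N\D(z,u),z)$, where by \eqref{eq:resurat} we have $N\D(z,u) = d\,p(z)q(z) + (u-z)\bigl(p'(z)q(z)-p(z)q'(z)\bigr)$. Writing $A(z) = d\,p(z)q(z) - z\bigl(p'(z)q(z)-p(z)q'(z)\bigr)$ and $B(z) = p'(z)q(z)-p(z)q'(z)$, we have $N\D(z,u) = A(z) + uB(z)$, a polynomial pencil in $z$ of the exact shape treated in Lemma~\ref{lm:multroots}. The discriminant $\mathrm{Discr}(N\D(z,u),z)$ — and hence the map $ID_{d,\pa}$ — is \emph{ill-defined} precisely when $N\D(z,u)$ fails to be a polynomial of honest, $u$-independent degree in $z$ whose discriminant is a nonzero polynomial in $u$; concretely, it is ill-defined exactly when $N\D(z,u)$ has a multiple root in $z$ for \emph{every} value of $u$, since then the resultant $\mathrm{Resultant}(N\D, N\D'_z, z)$ vanishes identically and the defining formula returns $0$, carrying no information. (One should also check the mild degree-drop issue in $z$, but the leading behavior is controlled by $\deg p, \deg q$ being fixed, so this is routine.)

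The heart of the argument is therefore the combination of Lemma~\ref{lm:multroots} with the hypotheses on $p$ and $q$. First I would handle coprimality. If $p$ and $q$ share a common root $z_0$, then $w(z)=p(z)/q(z)$ is not in lowest terms, and one checks directly from \eqref{eq:resurat} that $(z-z_0)$ divides every coefficient appropriately — more precisely $z_0$ becomes a base point making $N\D(z,u)$ factor a nontrivial $u$-independent polynomial in $z$ times a reduced pencil, which forces a spurious common factor and the discriminant to misbehave; so coprimality of $p,q$ is necessary, and I would phrase this as: the correct object is the \emph{reduced} rational function, and \eqref{eq:funct} is only the right formula when $(p,q)$ is already reduced. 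Next, Lemma~\ref{lm:multroots} applied to the pencil $A(z)+uB(z)$ says: $N\D(z,u)$ has a multiple root in $z$ for all $u$ iff $A$ and $B$ share a root of multiplicity $\ge 2$ in both. One then translates this back: $B(z)=p'q-pq'$ is (the numerator of) $w'(z)\cdot q(z)^2$, so a double root $z_0$ of $B$ that is \emph{not} a root of $q$ is a double critical point of $w$; combined with $A(z_0)=0$, i.e. $R_w(z_0) = z_0 - d\,w(z_0)/w'(z_0)$ finite and the condition forcing $w(z_0)=0$ or similar, one extracts that this happens exactly when $p$ (in the polynomial case $\pa=0$) or $p$ together with $q$ (in the case $\pa>0$) has a root of multiplicity $\ge 3$. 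The upshot: the map is well-defined iff no such triple root occurs, which is exactly the stated dichotomy (*)/(**).

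The main obstacle I anticipate is the bookkeeping in the case $\pa > 0$, where both $p$ and $q$ contribute to $A$ and $B$ and where a multiple root of $q$ interacts with the $q^2$ in the denominator of $w'$: one must verify carefully that a double root of $q$ is harmless (it cancels correctly and leaves a simple pole of $w$, hence a regular point of $R_w$), while a triple root of $q$ — or a triple root of $p$ — is precisely what makes $A$ and $B$ share a double root and kills the discriminant. Disentangling which multiplicities in $p$ versus $q$ propagate to which multiplicities in $A$ and $B$ is the delicate computation; I would organize it by writing $p = p_0^2\,\tilde p$, $q = q_0^2\,\tilde q$ with $\tilde p,\tilde q$ squarefree and $\gcd$'s trivial by coprimality, substituting into $B = p'q - pq'$, and reading off the multiplicity of each common zero of $A$ and $B$. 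Everything else — the degree count ensuring $2d+4\pa-4$ and the reduction $Rat_{d,0}^\star = Pol_d^\star$ — is routine.
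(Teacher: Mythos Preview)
Your approach is essentially the paper's: both rewrite $N\D(z,u)$ as the linear pencil $A(z)+uB(z)$ with $A=d\,pq-z(p'q-pq')$, $B=p'q-pq'$, and invoke Lemma~\ref{lm:multroots} to characterize when it has a multiple $z$-root for every $u$. Two small remarks on execution. For the necessity direction (common root, or a root of multiplicity $\ge 3$, forces ill-definedness) the paper is more direct than your ``spurious common factor'' sketch: it simply plugs such a $z^\ast$ into $N\D$ and $N\D'_z$ and checks that both vanish identically in $u$. For the sufficiency direction your planned ``delicate computation'' is actually warranted and not mere bookkeeping: the paper's closing sentence ``$p$ and $q$ are coprime and therefore $pq$ has no multiple roots'' is not literally true under the hypotheses (a double root of $p$ alone, or of $q$ alone, is permitted), and what really makes the argument go through is precisely the observation you are heading toward --- at such a double root one has $\Phi'=p''q-pq''$ nonzero, so $\Phi=B$ vanishes only to first order there and Lemma~\ref{lm:multroots} cannot be triggered.
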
  

\begin{proof}  To settle (*), let us first show that in case  $\pa>0$, if $p(z)$ and $q(z)$ have a common root $z^\ast$  then $N\D(z,u)$ and $N\D^\prime_z(z,u)$ have a common root w.r.t $z$ for any choice of $u$. Thus  
for such $(p(z),q(z))$, the map $ID_{d+\pa,\pa}$ is not defined.  Indeed, we have 

$$\begin{cases} 
N\D(z^\ast,u)=d\cdot p(z^\ast)q(z^\ast)+(u-z^\ast)(p^\prime(z^\ast)q(z^\ast)-p(z^\ast)q^\prime(z^\ast))\equiv 0\\
N\D^\prime_z(z^\ast,u)=(d-1) p^\prime(z^\ast)q(z^\ast)+(d+1) p(z^\ast)q(z^\ast)+(u-z^\ast)(p^{\prime\prime}(z^\ast)q(z^\ast)\\
-p(z^\ast)q^{\prime\prime}(z^\ast))\equiv 0, 
\end{cases} 
$$
i.e., both $N\D(z^\ast,u)$ and $N\D^\prime_z(z^\ast,u)$ vanish identically in the variable $u$.  Exactly the same argument holds if either $p$ or $q$ has a root $z^\ast$ of multiplicity exceeding $2$.

Let us now show the converse implication, i.e., that $ID_{d+\pa,\pa}$ is well-defined if $p(z)$ and $q(z)$ are coprime and have no roots of multiplicity bigger than $2$.  
Assume that $ID_{d+\pa,\pa}$ is not defined  for the pair $(p,q)$ which  is equivalent to the fact that $N\D(z,u)$ has a multiple root w.r.t $z$  for all values of  $u$. Observe that $N\D(z,u)$ is a polynomial pencil of the form $F(z)+u\Phi(z)$ where  $F(z)=d\, p q - z(p^{\prime}q-pq^{\prime})$ and $\Phi(z)=(p^{\prime} q-p q^{\prime})$.  Thus by Lemma~\ref{lm:multroots}, the univariate polynomials $F(z)$ and $\Phi(z)$ must have a common root of multiplicity at least $2$ at some point $z^\ast$.    But if $F(z)$ and $\Phi(z)$ have a multiple common root at $z^\ast$ then $pq$ and $p^{\prime} q-p q^{\prime}$ have a multiple common root at $z^\ast$ as well. But the latter claim is impossible since $p$ and $q$ are coprime and therefore $pq$ has no multiple roots. 

To settle (**) let us first show that if $P(z)$ has a root $z^\ast$ of multiplicity at least $3$, then $\D(z,u)=  d \, P(z)+(u-z) P^\prime(z)$ has a multiple root w.r.t. $z$ for any choice of $u$. Indeed,
$$
\begin{cases} 
\D(z^\ast,u)=d\, P(z^\ast)+(u-z^\ast)P^\prime(z^\ast)\equiv 0,\\
\D^\prime_z(z^\ast,u)=(d-1)\, P^\prime(z^\ast)+(u-z^\ast)P^{\prime\prime}(z^\ast)\equiv 0.
\end{cases}
$$
since $P$ has a least a triple root at $z^\ast$.  Now assume the converse, i.e., that $\D(z,u)$ has a multiple root w.r.t $z$ for any value of $u$. Since  $\D(z,u)$  is  a polynomial pencil of the form $d P(z)-z P^\prime(z) + u P^\prime(z)$,  Lemma~\ref{lm:multroots} implies that $P(z)$ and $P^\prime(z)$ must at least have a common double root $z^\ast$, i.e., $P(z)$ has a least a triple root. 
\end{proof}  

\smallskip
Next define  the polynomial families $p(z)=z^{d+\pa}+a_1z^{d+\pa-1}+\dots + a_{d+\pa}$ and $q(z)=z^\pa+b_1z^{\pa-1}+\dots + b_\pa$. Using these families, consider the expression \eqref{eq:resurat} and the map \eqref{eq:funct}. The following Lemma settles question (ii). 

\begin{lemma}\label{lm:leading}  
{\rm (i)} For $\pa>0$, in the above notation and up to a constant factor, the expression for the map  $ID_{d,\pa}$  has a factor $\mathrm{Resultant}(p,q)$, where  $\mathrm{Resultant}(p,q)$ stands for the resultant of  $p$ and $q$. The leading coefficient of $ID_{d,\pa}$, i.e. the coefficient of $u^{2d+4\pa-4}$ equals $\mathrm{Discr}(p^\prime q - p q^\prime)$, where $\mathrm{Discr}(p^\prime q - p q^\prime)$ stands for the discriminant of $\Phi(z)=p^\prime q - p q^\prime$. (The latter discriminant factors into $\mathrm{Resultant}(p,q)$ and an additional  factor, see Remark~\ref{rem:imp}.) 

\smallskip
\noindent
{\rm (ii)} For $\pa=0$ the leading coefficient of  $ID_{d}$ equals the discriminant $\mathrm{Discr}(P^\prime)$ of $P^\prime(z)$. In other words, the polynomial $ID_{d} (P)$ has degree less than $2d-4$ if and only if the derivative of the initial polynomial $P$ has multiple zeros. 
\end{lemma}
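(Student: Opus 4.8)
The plan is to conduct the whole analysis at the level of the pencil $N\D(z,u)=F(z)+u\,\Phi(z)$ in the variable $z$, where $\Phi(z)=p^\prime q-pq^\prime$ and $F(z)=d\,pq-z\,\Phi(z)$ (the notation from the proof of Proposition~\ref{lm:defi}), and then to invoke the standard homogeneity of the discriminant of a binary form in its coefficients. First I would do the degree bookkeeping: inspecting top-degree terms, $\Phi$ has degree exactly $n:=d+2\pa-1$ with leading coefficient $d$, while in $F=d\,pq-z\,\Phi$ the $z^{d+2\pa}$-terms of the two summands cancel, so $\deg_z F\le n$. Writing $F=\sum_{i=0}^{n}f_iz^{n-i}$ and $\Phi=\sum_{i=0}^{n}\phi_iz^{n-i}$ (with $\phi_0=d\ne0$), this gives
\[
N\D(z,u)=\sum_{i=0}^{n}\bigl(f_i+u\,\phi_i\bigr)z^{n-i},
\]
a degree-$n$ form in $z$ whose coefficients are affine-linear in $u$ and whose leading coefficient $f_0+ud$ is not identically zero; hence $ID_{d,\pa}(u)=\mathrm{Discr}(N\D(z,u),z)$ is the discriminant of an honest degree-$n$ form and lies in $\PP_{2n-2}=\PP_{2d+4\pa-4}$.

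Next I would extract the leading coefficient. Since $\mathrm{Discr}(c_0z^n+\dots+c_n)$, regarded as a polynomial in $(c_0,\dots,c_n)$, is homogeneous of degree $2n-2$, substituting $c_i=f_i+u\phi_i$ and collecting the coefficient of the top power $u^{2n-2}$ simply replaces each $c_i$ by $\phi_i$ in every monomial; thus this coefficient equals $\mathrm{Discr}(\phi_0z^n+\dots+\phi_n)=\mathrm{Discr}(\Phi)=\mathrm{Discr}(p^\prime q-pq^\prime)$. For $\pa=0$ one has $q\equiv1$, $\Phi=p^\prime=P^\prime$, $n=d-1$, so the leading coefficient of $ID_d(P)$ is $\mathrm{Discr}(P^\prime)$, which vanishes exactly when $P^\prime$ has a multiple root; this settles (ii) together with its reformulation, and the leading-coefficient assertion of (i).

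For the remaining claim of (i) — that $\mathrm{Resultant}(p,q)$ is a factor of $ID_{d,\pa}$ — I would view $ID_{d,\pa}(u)$ as an element of $\bC[a_1,\dots,a_{d+\pa},b_1,\dots,b_\pa][u]$. If $p$ and $q$ share a root, then, exactly as in the proof of Proposition~\ref{lm:defi}(*), the pencil $N\D(z,u)$ has a multiple root in $z$ for every $u$, so $\mathrm{Discr}(N\D,z)\equiv0$; hence every $u$-coefficient of $ID_{d,\pa}$ vanishes on the hypersurface $\{\mathrm{Resultant}(p,q)=0\}$ in the space of coefficients. Because $\mathrm{Resultant}(p,q)$ is square-free (in fact irreducible; square-freeness suffices and follows quickly from the factorization $\mathrm{Resultant}(p,q)=\pm\prod_iq(\alpha_i)$, a product of pairwise non-associate affine-linear forms in $b_1,\dots,b_\pa$ when the roots $\alpha_i$ of $p$ are distinct, together with its symmetric counterpart $\pm\prod_jp(\beta_j)$), the ideal of polynomials vanishing on that hypersurface is exactly $(\mathrm{Resultant}(p,q))$ by the Nullstellensatz, whence $\mathrm{Resultant}(p,q)\mid ID_{d,\pa}(u)$. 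The same argument applied to $\Phi=p^\prime q-pq^\prime$, which acquires a double root at any common root of $p$ and $q$, shows that $\mathrm{Resultant}(p,q)$ also divides $\mathrm{Discr}(\Phi)$, consistent with Remark~\ref{rem:imp}.

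The only genuinely delicate point is this last step: passing from ``$ID_{d,\pa}$ vanishes wherever $\mathrm{Resultant}(p,q)=0$'' to ``$\mathrm{Resultant}(p,q)$ divides $ID_{d,\pa}$ as a polynomial'', which is precisely where the radicality (square-freeness) of $\mathrm{Resultant}(p,q)$ is used; everything else is routine bookkeeping with leading terms and the homogeneity of the discriminant.
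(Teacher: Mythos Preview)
Your proof is correct and follows essentially the same route as the paper: the leading-coefficient computation via homogeneity of the discriminant in its arguments is exactly the paper's Sylvester-matrix argument phrased more cleanly, and the factor claim is the same vanishing-on-the-resultant-hypersurface observation drawn from Proposition~\ref{lm:defi}. If anything you are more careful than the paper, which omits the Nullstellensatz/square-freeness step you correctly flag as the one nontrivial point in passing from ``vanishes on $\{\mathrm{Resultant}(p,q)=0\}$'' to ``is divisible by $\mathrm{Resultant}(p,q)$''.
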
  


\begin{proof}   To settle (i),  observe that  by Proposition~\ref{lm:defi},  if  $p(z)$ and $q(z)$ have a common root $z^\ast$ then the bivariate polynomial $N\D(z,u)$ given by \eqref{eq:map} has a multiple root at $z^\ast$ in the variable $z$ for all values of the second variable $u$ which implies that the map $ID_{d,\pa}$ vanishes identically. To show that the leading coefficient of  $ID_{d,\pa}$ equals $\text{Discr}(p^\prime q - p q^\prime)$ let us calculate  $ID_{d,\pa}$ using the standard determinantal formula for the resultant of the Sylvester matrix, see e.g. \cite {GKZ}, ch. 12.  In our situation each non-vanishing entry  of the Sylvester matrix will be a linear non-homogeneous polynomial in $u$ with the leading coefficient and the constant term being polynomials in $z$. If we drop all the constant terms and just keep the terms containing $u$ in the Sylvester matrix, then the determinant of this matrix will be equal to $u^{2d+4\partial -4}$ times the discriminant of the coefficient of $u$ in the original expression $N\D(z,U)$. Since this coefficient equals $p^\prime q- q^\prime p$, the claim follows. 

The argument for (ii) is exactly the same as for (i).  
\end{proof}   

\begin{remark} {\rm Observe that Lemma~\ref{lm:leading}  explains why in the classical situation there exists only one isodynamic point for a triple of non-collinear points in the plane if and only if they form an equilateral triangle; see \S~\ref{sec:ex} below.}
\end{remark}

To solve question (iii), let us first discuss a more general set-up. Assume that we consider a family $\Phi_\lambda(z)=\frac{U_\lambda(z)}{V_\lambda(z)}$ of univariate rational functions depending on some multi-dimensional parameter $\lambda$ taking values in some connected complex  algebraic variety $\Lambda$. We  assume that 

\noindent
(a) univariate polynomials $U_\lambda(z)$ and $V_\lambda(z)$ are coprime for generic values of $\lambda \in \Lambda$, but they can have a common factor for  special values of $\lambda$ belonging to some complex algebraic subvariety of $\Lambda$;

\noindent
 (b)  for generic $\lambda$, $\Phi_\lambda(z)$ has distinct and simple critical values. 
 
 \smallskip By our assumptions, the number of distinct critical values of $\Phi_\lambda(z)$ is constant for almost all $\lambda\in \Lambda$. Denoting this number by $\kappa$, let us  
define the Lyashko-Looijenga map $\LL: \Lambda \to \bC^\kappa$  sending every point $\lambda \in \Lambda$ to the divisor of the critical values of $\Phi_\lambda(z)$. 
We define the \emph{Hurwitz discriminant}    $\HD\subset \Lambda$    
as the set of all  $\lambda \in \Lambda$ for which the divisor $\LL(\lambda)$ does not consist of $\kappa$  simple points. 
According to \cite{BoSh},  set-theoretically, the Hurwitz discriminant $\HD$ typically  contains three irreducible components $\HD^0\cup \HD^W \cup \HD^M$, where  
$${\rm(i)}\quad \HD^0 =\{\lambda\in \Lambda \vert \exists z^\ast \; \text{such that}\;U_\lambda(z^\ast)=V_\lambda(z^\ast)=0\};$$

\noindent
{\rm(ii)}\quad  $\HD^W$ is the closure of $\HD^W_o$ where 
$$\HD^W_o=\{\lambda\in \Lambda\setminus\HD^0 \vert \exists z^\ast \; \text{such that}\; W_\Lambda(U;V) \; \text{has (at least) a double root at}\;z^\ast\}, $$
and $W_\Lambda(U;V)=U_\lambda^\prime(z)V_\lambda(z)- U_\lambda(z)V_\lambda^\prime(z)$ is the Wronskian of $U_\lambda(z)$ and $V_\lambda(z)$; 

\smallskip
\noindent
{\rm(iii)}\quad  $\HD^M$ is the closure of $\HD^M_o$ where
$$\HD^M_o=\{\lambda\in \Lambda \vert \exists z_1\neq z_2 \; \text{such that}\; \Phi_\lambda^\prime(z_1)=\Phi_\lambda^\prime(z_2)=0 \; \text{and}\;  \Phi_\lambda(z_1)=\Phi_\lambda(z_2)  \}.$$

\begin{remark}\label{rem:imp}  {\rm The union $\HD^0\cup \HD^W$ considered as a subset of $\Lambda$  coincides  with the zero locus of the discriminant of the  Wronskian  $W_\Lambda(U;V)$ with respect to $z$.  The third irreducible component $\HD^M$ is usually referred to as the \emph{ Maxwell stratum}, see e.g. \cite{LZ}.}  
\end{remark}  

In our specific situation the family of rational functions $R_P(z)$ under consideration is given by \eqref{eq:ass1} where $P(z)$ runs over the space $Pol_d$. Set 
{\small
\begin{equation}\label{eq:same}
\begin{cases}
\U_P(z)=a_1z^{d-1}+2a_2z^{d-2}+\dots+(d-1)a_{d-1}z+da_d=zP^\prime(z)-d\, P(z); \\
\V_P(z)=dz^{d-1}+(d-1)a_1z^{d-2}+\dots+2a_{d-2}z+a_{d-1}=P^\prime(z)
\end{cases}
\end{equation} }
 i.e., $\U_P(z)$ is the numerator and $\V_P(z)$ is the denominator of the associated rational function $R_P(z)$. 
 
 Set $\D_d := \mathrm{Resultant}\left(ID_d, \frac{\partial ID_d^\prime}{\partial u}, u\right)$.  Numerical experiments with Mathematica strongly support the following guess. 
 
 \begin{conjecture}\label{conj:mult} {\rm (a)} In the above notation, for any $d\ge 3$, $\D_d=(\D_d^0)^{j_d^0} (\D_d^W)^{j^W_d} (\D_d^M)^{j^M_d},$
 where $\D_d^0, \D_d^W, \D_d^M$ are irreducible polynomials whose zero loci satisfy the above conditions {\rm (i), (ii), (iii)} respectively and $j_d^0, j^W_d, j^M_d$ are some non-negative multiplicities. 
 
 \smallskip
 \noindent
 {\rm (b)} The non-negative multiplicities  $j_d^0, j_d^W, j_d^M$ are all positive for $d\ge 5$, see \S~\ref{sec:ex}. 
 
 \end{conjecture}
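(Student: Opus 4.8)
The plan is to prove part~(a) by identifying $\D_d$, up to a nonzero constant and the leading-coefficient factor discussed below, with a product of powers of three \emph{irreducible} polynomials whose zero loci are exactly the strata (i), (ii), (iii), and to prove part~(b) by showing that for $d\ge 5$ each of these strata is a genuine nonempty codimension-one component of that zero locus. First I would fix the concrete data. By \eqref{eq:same}, $R_P=-\U_P/\V_P$ with $\U_P(z)=zP'(z)-dP(z)$ and $\V_P(z)=P'(z)$, both of degree $d-1$, so $R_P\colon\bC P^1\to\bC P^1$ is a rational map of degree $d-1$ and, by Riemann--Hurwitz, carries exactly $2d-4$ ramification points; for generic $P$ these lie in $\bC$, are distinct, and have distinct finite images, namely the isodynamic points. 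Three identities then carry everything: \textbf{(1)} $\HD^0=\{P:\exists z^\ast,\ \U_P(z^\ast)=\V_P(z^\ast)=0\}$ coincides with $\{\mathrm{Discr}_z(P)=0\}$, since $\U_P(\zeta)=-dP(\zeta)$ at every root $\zeta$ of $\V_P=P'$; \textbf{(2)} the ramification divisor of $R_P$ in $\bC$ is the zero set of the Wronskian $W_P:=\U_P'\V_P-\U_P\V_P'=dPP''-(d-1)(P')^2$, a polynomial of degree $2d-4$ whose leading coefficient is proportional to $(d-1)a_1^2-2da_2=\sum_{i<j}(r_i-r_j)^2$, the $r_i$ being the roots of $P$; \textbf{(3)} by Lemma~\ref{lm:leading}(ii) the leading coefficient of $ID_d(P)$ in $u$ is $\mathrm{Discr}_z(P')$, so the resultant $\D_d=\mathrm{Resultant}_u(ID_d,\partial_u ID_d)$ differs from the reduced ``finite-collision'' discriminant $\mathrm{Discr}_u(ID_d)$ only by the factor $\mathrm{Discr}_z(P')$. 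The very first point to settle is the role of that $\mathrm{Discr}_z(P')$: set-theoretically it is the locus where one isodynamic point escapes to $\infty$ and is a \emph{simple} point of the projective branch divisor $ID_d^P$, so it ought not contribute a new irreducible factor beyond the three of the statement. Making this precise --- equivalently, phrasing the factorization for $\mathrm{Discr}_u(ID_d)$ and checking that $\mathrm{Discr}_z(P')$ is not silently one of the three strata (which, e.g., for $d=3$ it visibly is not) --- is bookkeeping I would not skip.

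The heart of part~(a) is then the set-theoretic identity $V(\D_d)=\HD^0\cup\HD^W\cup\HD^M$. Here I would invoke the Hurwitz-discriminant trichotomy recalled just above, applied to $\Lambda=Pol_d$, $\Phi_P=R_P$, $U_P=\U_P$, $V_P=\V_P$: a genuine degeneracy of the branch divisor of $R_P$ occurs precisely when either $\U_P$ and $\V_P$ acquire a common root (pole--zero cancellation, the locus $\HD^0$), or $W_P$ acquires a multiple root away from $\HD^0$ (a ramification point of $R_P$ of multiplicity $\ge 3$, the locus $\HD^W$), or two distinct ramification points of $R_P$ share a critical value (the Maxwell stratum $\HD^M$); this is the content of \cite{BoSh}. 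Combined with identity~\textbf{(3)} and the remark just made on $\mathrm{Discr}_z(P')$, this yields the stated set-theoretic description; moreover Remark~\ref{rem:imp} identifies $\mathrm{Discr}_z(W_P)$, up to a constant, as a product of powers of the equations of $\HD^0$ and $\HD^W$ only, which is convenient below.

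Next I would prove that each of the three defining polynomials is irreducible, which promotes the set-theoretic identity to the claimed factorization. For $\D_d^0$, which by identity~\textbf{(1)} is a constant multiple of $\mathrm{Discr}_z(P)$, irreducibility is the classical irreducibility of the discriminant of the generic monic polynomial. For $\D_d^W$: by Remark~\ref{rem:imp}, $\mathrm{Discr}_z(W_P)=c\,(\D_d^0)^{a}(\D_d^W)^{b}$, so it suffices to see the ``new'' factor is irreducible; I would present $\HD^W$ as the closure of the image, under the generically one-to-one projection to $Pol_d$, of the incidence variety $\{(P,z^\ast):W_P(z^\ast)=W_P'(z^\ast)=0,\ P(z^\ast)\ne 0\}$, observe that after translating to normalize $z^\ast=0$ this variety fibres over the $z^\ast$-line with irreducible fibres cut out by two independent affine-linear conditions on the coefficients of $P$, hence is irreducible, so its image is irreducible. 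For $\D_d^M$ the analogous model is the incidence variety $\{(P,z_1,z_2):z_1\ne z_2,\ R_P'(z_1)=R_P'(z_2)=0,\ R_P(z_1)=R_P(z_2)\}$, and its irreducibility is the crux: I would deduce it from connectedness of the Hurwitz space of degree-$(d-1)$ rational maps of $\bC P^1$ with one $(2,2,1,\dots,1)$-fibre and otherwise simple branching, together with the fact that the Maxwell condition cuts out an irreducible divisor there --- a Clebsch--Hurwitz-type monodromy input, in the spirit of \cite{LZ}. Once all three hypersurfaces are irreducible and known to exhaust $V(\D_d)$, part~(a) follows: $\D_d$ is a polynomial whose zero set is the union of three distinct irreducible hypersurfaces, so up to a nonzero constant (and the separately understood $\mathrm{Discr}_z(P')$) it equals $(\D_d^0)^{j_d^0}(\D_d^W)^{j_d^W}(\D_d^M)^{j_d^M}$ with nonnegative exponents.

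For part~(b) one must show these exponents are \emph{positive} once $d\ge 5$, i.e.\ that a generic point of each stratum genuinely degenerates $ID_d(P)$. On $\HD^0$ a double root $r$ of $P$ drops $\deg R_P$ by one and forces two critical values to merge at $R_P(r)$, so $ID_d(P)$ degenerates and $j_d^0>0$. On $\HD^W$ a multiplicity-$3$ ramification point of $R_P$ forces (generically) two isodynamic points to coincide at its critical value, so $j_d^W>0$. On $\HD^M$ two isodynamic points coincide by construction, so $j_d^M>0$, \emph{provided $\HD^M\ne\varnothing$}; and the threshold $d\ge 5$ is precisely that nonemptiness --- a fibre of $R_P$ containing two distinct ramification points has local degree $\ge 2+2=4>\deg R_P=d-1$ when $d\le 4$, while for $d=5$ the profile $(2,2)$ exactly fills a fibre of the degree-$4$ map $R_P$ and an explicit construction (or dimension count) then exhibits a nonempty codimension-one $\HD^M$. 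I expect the two genuinely hard points to be \textbf{(I)} the leading-coefficient bookkeeping around $\mathrm{Discr}_z(P')$ from the first paragraph, i.e.\ ruling out spurious or overlapping factors uniformly in $d$, and \textbf{(II)} the irreducibility of the Maxwell factor $\D_d^M$, which hinges on the Hurwitz-space connectedness input; it is also conceivable that for a few small $d$ two of the factors coincide, so the final step must confront --- rather than assume --- the ``exactly three distinct factors'' assertion.
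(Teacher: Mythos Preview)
The first thing to note is that the paper does \emph{not} prove this statement: it is explicitly labelled a conjecture, introduced by ``Numerical experiments with Mathematica strongly support the following guess,'' and reappears in \S\ref{sec:final} as open problem~5. The only evidence offered is the explicit factorizations for $d=3,4,5$ in \S\ref{sec:ex}. So there is no ``paper's own proof'' to compare against; you are attempting to prove something the authors left open.

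Your outline is a reasonable strategy and correctly isolates the two substantive difficulties, but as written it is a proof \emph{plan} rather than a proof, and the gaps you flag at the end are real. The irreducibility of $\D_d^M$ is the crux: you invoke connectedness of a Hurwitz space of degree-$(d-1)$ covers of $\bC P^1$ with a single $(2,2,1,\dots,1)$ fibre and otherwise simple branching, but you do not establish it, and you also do not justify why the map from that Hurwitz space to $Pol_d$ (via $R_P\mapsto P$) is generically injective onto $\HD^M$---without this, irreducibility upstairs need not descend. The incidence-variety argument for $\D_d^W$ is more convincing but still needs the check that the two affine-linear conditions you impose on the coefficients of $P$ (after normalizing $z^\ast=0$) are genuinely independent for all $d$, and that the projection to $Pol_d$ is birational onto its image. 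The leading-coefficient bookkeeping you flag as point (I) is also not innocent: the paper defines $\D_d$ as a resultant, yet its computed $\D_3$ is visibly the \emph{discriminant} of $ID_3$ (no factor $a^2-3b$ appears), so even the intended normalization of $\D_d$ is ambiguous, and you would have to pin it down before any uniform-in-$d$ argument. Finally, for part~(b) you assert that for $d\ge 5$ the Maxwell stratum is nonempty and codimension one by ``an explicit construction (or dimension count),'' but neither is supplied; the paper's quintic example shows nonemptiness for $d=5$, but the general case still needs an argument.

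In short: your decomposition into the three strata and your identification of $\HD^0$ with $\{\mathrm{Discr}(P)=0\}$ match the paper's Lemma~\ref{lm:zero} and the discussion around Remark~\ref{rem:imp}, and the threshold $d\ge 5$ via the fibre-degree bound is the right mechanism. But the two items you yourself mark as hard---irreducibility of the Maxwell factor and the $\mathrm{Discr}(P')$ accounting---remain open in your write-up, so the conjecture is not yet proved.
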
  

\begin{lemma}\label{lm:triv2} In the above notation, $\U_P(z)$ and $\V_P(z)$ are linearly dependent, i.e., $R_P(z)$ is a constant if and only if $P(z)=(z+t)^d$  for some constant $t$.
\end{lemma}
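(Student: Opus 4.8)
The plan is to reduce the asserted linear dependence to an Euler-type first-order differential equation for $P$ and then to read off its monic polynomial solutions.

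First I would record that, since $P$ is monic of degree $d\ge 3$, the polynomial $\V_P=P'$ is not identically zero; hence any nontrivial linear relation between $\U_P$ and $\V_P$ must have nonzero coefficient in front of $\U_P$, i.e.\ it is equivalent to $\U_P=\lambda\,\V_P$ for some $\lambda\in\bC$. Using $\U_P(z)=zP'(z)-d\,P(z)$ from \eqref{eq:same}, this reads
\begin{equation*}
(z-\lambda)\,P'(z)=d\,P(z).
\end{equation*}
Equivalently (again because $P'\not\equiv 0$) the associated rational function $R_P=\U_P/\V_P$ is the constant $\lambda$. Thus it suffices to describe the monic solutions $P$ of this polynomial identity and to check the converse.

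For the forward direction I would compare orders of vanishing at the zeros of $P$. Let $r\in\bC$ be any zero of $P$ of multiplicity $m\ge 1$. If $r\neq\lambda$, then the left-hand side $(z-\lambda)P'(z)$ vanishes at $r$ to order exactly $m-1$ (the factor $z-\lambda$ is a unit at $r$, and $P'$ vanishes there to order $m-1$), while the right-hand side vanishes to order $m$; this is impossible. Hence $\lambda$ is the only zero of $P$, and since $P$ is monic of degree $d$ we get $P(z)=(z-\lambda)^d$, i.e.\ $P(z)=(z+t)^d$ with $t=-\lambda$. (Alternatively one may note that the identity forces $P(\lambda)=0$, substitute $P(z)=(z-\lambda)P_1(z)$, obtain $(z-\lambda)P_1'(z)=(d-1)P_1(z)$, and induct on the degree; or simply integrate $P'/P=d/(z-\lambda)$.)

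For the converse, a direct computation suffices: if $P(z)=(z+t)^d$ then $\V_P=P'=d(z+t)^{d-1}$ and $\U_P=zP'-dP=d(z+t)^{d-1}\bigl(z-(z+t)\bigr)=-dt\,(z+t)^{d-1}$, so $\U_P=-t\,\V_P$ and $\U_P,\V_P$ are linearly dependent (equivalently $R_P\equiv -t$). I do not expect a genuine obstacle here: the only points requiring a little care are the reduction in the first step (ruling out a relation supported on $\V_P$ alone, which uses $P'\not\equiv 0$) and the remark that the degenerate case $\lambda=0$ corresponds to $P(z)=z^d$, which is already included in the family $(z+t)^d$. The real content of the lemma is the observation that the defining relation is exactly the Euler equation whose monic polynomial solutions are precisely the $d$-th powers of linear forms.
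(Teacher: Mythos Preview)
Your argument is correct and follows the same route as the paper, which records only ``Direct calculation of proportionality.'' Your reduction to the Euler-type identity $(z-\lambda)P'(z)=d\,P(z)$ and the multiplicity count at the zeros of $P$ make that calculation explicit and are sound; the only slip is that in this paper $R_P=-\,\U_P/\V_P$ (see \eqref{eq:ass1} and \eqref{eq:same}), not $\U_P/\V_P$, so in the converse one gets $R_P\equiv t$ rather than $-t$, but this is immaterial for the lemma.
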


\begin{proof} Direct calculation of proportionality. \end{proof}

\begin{lemma}\label{lm:zero} In the above notation,  $\U_P(z)$ and $\V_P(z)$ have a common zero if and only  $P(z)$ has a multiple root, i.e., the discriminant $\D_d^0$  coincides with the discriminant of $P(z)$.  
\end{lemma}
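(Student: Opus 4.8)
The plan is to read the proof directly off the explicit formulas in \eqref{eq:same}: $\U_P(z) = zP^\prime(z) - d\,P(z)$ and $\V_P(z) = P^\prime(z)$. One then checks the claimed equivalence root by root, and afterwards upgrades the resulting equality of zero loci to the stated equality of polynomials $\D_d^0 = \mathrm{Discr}(P)$.

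For the forward implication, suppose $\U_P$ and $\V_P$ share a zero $z^\ast\in\bC$. Then $\V_P(z^\ast)=P^\prime(z^\ast)=0$, so $\U_P(z^\ast)=z^\ast P^\prime(z^\ast)-d\,P(z^\ast)=-d\,P(z^\ast)$, and since $d\ge 3\neq 0$ this forces $P(z^\ast)=0$. Hence $z^\ast$ is a common zero of $P$ and $P^\prime$, i.e.\ a root of $P$ of multiplicity at least $2$. Conversely, if $z^\ast$ is a multiple root of $P$ then $P(z^\ast)=P^\prime(z^\ast)=0$, so immediately $\V_P(z^\ast)=0$ and $\U_P(z^\ast)=z^\ast\cdot 0-d\cdot 0=0$. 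Thus $\U_P$ and $\V_P$ have a common zero exactly when $P$ has a multiple root.

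It then remains to identify $\D_d^0$ as a polynomial in the coefficients $a_1,\dots,a_d$. By definition $\D_d^0$ is the reduced equation of the hypersurface $\HD^0=\{P\in Pol_d:\exists\,z^\ast,\ \U_P(z^\ast)=\V_P(z^\ast)=0\}$, i.e.\ (up to a constant) the squarefree part of $\mathrm{Resultant}(\U_P,\V_P,z)$. Since $P$ is monic, $\V_P=P^\prime$ has degree exactly $d-1$, so this resultant detects precisely the common zeros of $\U_P$ and $\V_P$ in $\bC$ and receives no spurious contribution ``from infinity''; by the previous paragraph the corresponding locus is the classical multiple-root locus $\{\mathrm{Discr}(P)=0\}$. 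As $\mathrm{Discr}(P)$ is a reduced irreducible polynomial in $a_1,\dots,a_d$ cutting out that same irreducible hypersurface, it must agree with $\D_d^0$ up to a nonzero scalar, which is the assertion.

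The argument is essentially immediate; the only two points requiring a word of care are that the leading coefficient of $\U_P$ (which is $\pm a_1$) may vanish — harmless here because $\V_P$ never drops degree — and the invocation of irreducibility of the discriminant of a univariate polynomial, needed to pass from the set-theoretic statement to the polynomial identity $\D_d^0=\mathrm{Discr}(P)$.
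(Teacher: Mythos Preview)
Your argument is correct and follows the same route as the paper: use the identities $\U_P=zP'-dP$ and $\V_P=P'$ from \eqref{eq:same} to reduce a common zero $z^\ast$ of $\U_P,\V_P$ to the condition $P(z^\ast)=P'(z^\ast)=0$. The paper records only the forward implication in one line; your version spells out the (trivial) converse and adds the justification, via irreducibility of $\mathrm{Discr}(P)$, for upgrading the equality of zero loci to the polynomial identity $\D_d^0=\mathrm{Discr}(P)$, which the paper leaves implicit.
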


\begin{proof} 
Indeed, if $\U_P(z^*)=\V_P(z^*)=0$ then $P(z^*)=P^\prime(z^*)=0$ which means that $P(z)$ has a multiple root at $z^*$. 
\end{proof} 

\begin{remark} {\rm Equation~\eqref{eq:same}  implies that 
for a given degree $d$, the Wronskian $W(\U_P,V_P)$ is given by $(d-1)(P^\prime)^2-d P P^{\prime\prime}$. The latter expression  has previously occurred in  
\cite{Lo} in connection with the stronger form of Laguerre inequality, see also \cite{FK}.} 
\end{remark}




Note that the definition \eqref{eq:ratPolarDerivative} of polar derivative of rational functions hints the following definition of a more general polar derivative
\begin{equation}\label{eq:alphaPolarDerivative}
D^\alpha_f(z,u) = \alpha f(z) + (u-z)f'(z)
\end{equation}
where $f(z)$ belongs to some suitable class of functions and $\alpha$ is a real parameter. This expression appears naturally e.g. in the calculation of the asymptotic root-counting measure of the sequence of polynomials $\{(P^n)^{(\lfloor\alpha n\rfloor)}\}$, see \cite{BoHaSh}. In our situation, for $\alpha=1$ and polynomials $P(z)$ of degree $d$, we can define the degree $d-2$ polynomial $\mathcal{W}_P(u) = \mathrm{Discr}(D^1_P(z,u),z) / P(u)$ with zeros $u_1,\dots,u_{d-2}$, which has the following connection to the isodynamic points of $P$.
\begin{lemma}
\label{lm:IsodynamicCentroidCentroid}
For any triple of non-collinear points $z_1,z_2,z_3$, i.e. for $d=3$,  $$u_1 = \left(\frac{2}{3}\right)^2\frac{(z_1+z_2+z_3)^3 - 3^3 z_1 z_2 z_3}{\mathrm{Discr}(P'(z),z)}$$ is the centroid of the isodynamic points $\mathcal{I}_1,\,\mathcal{I}_2$ and the mass center $(z_1+z_2+z_3)/3$. Furthermore, $u_1$ is listed as $X(26613)$ in the Encyclopedia of Triangle Centers \cite{Ki}, and is the Dao-6-point-circle-inverse of $(z_1+z_2+z_3)/3$.
\end{lemma}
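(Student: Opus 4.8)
The plan is to reduce the whole statement to symmetric‑function computations in $z_1,z_2,z_3$. Write $P(z)=(z-z_1)(z-z_2)(z-z_3)=z^3-e_1z^2+e_2z-e_3$ with $e_1=z_1+z_2+z_3,\ e_2=\sum_{i<j}z_iz_j,\ e_3=z_1z_2z_3$; then $P'(z)=3z^2-2e_1z+e_2$ and $\mathrm{Discr}(P'(z),z)=4(e_1^2-3e_2)$. I would work throughout under the assumption that the triangle is not equilateral, i.e. $e_1^2\neq 3e_2$ (for an equilateral triangle the denominators below vanish and one isodynamic point escapes to infinity, consistently with the classical picture); the algebraic identities established are polynomial in the $e_i$, so the restriction is harmless.

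First I would compute the isodynamic polynomial directly from \eqref{eq:formula}. Expanding the polar derivative $\D(z,u)=3P(z)+(u-z)P'(z)$ gives a quadratic in $z$,
\[
\D(z,u)=(3u-e_1)z^2+2(e_2-e_1u)z+(e_2u-3e_3),
\]
so
\[
ID_3(P)=\mathrm{Discr}(\D(z,u),z)=4\bigl[(e_1^2-3e_2)u^2+(9e_3-e_1e_2)u+(e_2^2-3e_1e_3)\bigr].
\]
Since the isodynamic points $\mathcal I_1,\mathcal I_2$ are by definition the zeros of $ID_3(P)$, Vieta's formulas give $\mathcal I_1+\mathcal I_2=(e_1e_2-9e_3)/(e_1^2-3e_2)$, whence
\[
\tfrac13\Bigl(\mathcal I_1+\mathcal I_2+\tfrac{e_1}{3}\Bigr)=\frac{3(e_1e_2-9e_3)+e_1(e_1^2-3e_2)}{9(e_1^2-3e_2)}=\frac{e_1^3-27e_3}{9(e_1^2-3e_2)}=\Bigl(\tfrac23\Bigr)^2\frac{e_1^3-27e_3}{\mathrm{Discr}(P'(z),z)}.
\]
As $e_1^3-27e_3=(z_1+z_2+z_3)^3-3^3z_1z_2z_3$, this is exactly the expression for $u_1$ in the statement, so it remains to verify that this value is genuinely the zero of $\mathcal W_P$.

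Second I would identify the zero of $\mathcal W_P$. Expanding $D^1_P(z,u)=P(z)+(u-z)P'(z)=-2z^3+(e_1+3u)z^2-2e_1uz+(e_2u-e_3)$, a cubic in $z$ with coefficients affine in $u$, so $\Delta(u):=\mathrm{Discr}(D^1_P(z,u),z)$ has degree at most $4$ in $u$. For each simple root $z_i$ of $P$ one has $D^1_P(z_i,z_i)=P(z_i)=0$ and $\partial_z D^1_P(z,z_i)\big|_{z=z_i}=(z_i-z_i)P''(z_i)=0$, so $z=z_i$ is a double root of $D^1_P(\cdot\,,z_i)$, hence $\Delta(z_i)=0$; as $z_1,z_2,z_3$ are distinct this forces $P(u)\mid\Delta(u)$ with a linear quotient $\mathcal W_P(u)=\alpha u+\beta$. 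Here $\alpha$ is the coefficient of $u^4$ in $\Delta$ and $\beta$ is pinned by $\Delta(0)=P(0)\mathcal W_P(0)=-e_3\beta$. Applying the cubic discriminant formula $\Delta=18abcd-4b^3d+b^2c^2-4ac^3-27a^2d^2$ with $a=-2,\ b=e_1+3u,\ c=-2e_1u,\ d=e_2u-e_3$, only $-4b^3d$ and $b^2c^2$ contribute to $u^4$, giving $\alpha=36(e_1^2-3e_2)$, while $\Delta(0)=4e_1^3e_3-108e_3^2=4e_3(e_1^3-27e_3)$ gives $\beta=-4(e_1^3-27e_3)$. This determines $\alpha,\beta$ as polynomials in the $e_i$, so $\mathcal W_P(u)=36(e_1^2-3e_2)u-4(e_1^3-27e_3)$ for all triangles by Zariski density, and its unique zero is $u_1=-\beta/\alpha=(e_1^3-27e_3)/\bigl(9(e_1^2-3e_2)\bigr)$, precisely the centroid computed above.

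Finally, for the identification of $u_1$ with the triangle center $X(26613)$ and its description as the Dao six‑point‑circle inverse of the centroid, I would pass from the complex‑coordinate formula $u_1=\tfrac49(e_1^3-27e_3)/\mathrm{Discr}(P'(z),z)$ to barycentric coordinates with respect to $\triangle z_1z_2z_3$ (expressing $e_1,e_2,e_3$ through the side lengths $a,b,c$) and compare with the published data for $X(26613)$ in \cite{Ki}; the statement about the Dao six‑point circle is then read off from the corresponding line of that entry. The only genuinely delicate point in the argument is controlling the cubic‑discriminant bookkeeping, and the shortcut above — using the a priori divisibility $P(u)\mid\Delta(u)$ and reading off only the top and bottom coefficients of $\Delta(u)$ — avoids expanding $\Delta$ in full.
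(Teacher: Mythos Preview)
Your proof is correct and matches the paper's approach, which is simply labelled ``Trivial calculations.'' Your argument spells these out cleanly, and the observation that $P(u)\mid \mathrm{Discr}(D^1_P(z,u),z)$ (via the double root at $z=z_i$ when $u=z_i$) is a pleasant shortcut that lets you read off $\mathcal W_P$ from just the top and bottom coefficients of the quartic discriminant rather than expanding it in full.
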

\begin{proof}
Trivial calculations.
\end{proof}
Additionally, if $m = m(\alpha)$ denotes the centroid of the zeros of $\mathrm{Discr}(D^\alpha_P(z,u),z)$, then for cubic polynomials $P$, $m(\alpha)$ coincides with  several known triangle centers in \cite{Ki} for various values of $\alpha\in\mathbb{Z}$, see Table \ref{table:knownCentroids}. (The connection between $m(\alpha)$ and these points can  be proved by converting their corresponding trilinear or absolute barycentric coordinates in \cite{Ki} to complex numbers using a reference triangle with vertices $z_1,\,z_2,$ and $z_3$. Mathematica files containing these calculations are available upon request.)

\begin{table}
\begin{tabular}{| p{0.7cm} | p{1.8cm} |}
\hline
$\alpha$ & $m$ \\ \hline
-12 & $X(316)$ \\
-6 & $X(31173)$ \\
-3 & $X(625)$ \\
-1 & $X(10150)$ \\
0 & $X(2)$ \\
2 & $X(5215)$ \\
3 & $X(187)$ \\
4 & $X(26613)$ \\
6 & $X(187)$ \\
24 & $X(14712)$ \\
\hline
\end{tabular}
\vskip0.4cm
\caption{Known centroids of the zeros of $\mathrm{Discr}(D^\alpha_P(z,u),z)$ for cubic $P$, for $\alpha = -50,-49,\dots,50$.}
\label{table:knownCentroids}
\end{table}
\begin{remark} {\rm 
Due to the considerable number of known triangle centers in Table \ref{table:knownCentroids}, and the fact that $m(\alpha)$ traverses a real line in $\mathbb{C}$ as $\alpha$ runs over the real line for any polynomial $P$ of degree $d\ge 2$ with simple zeros, the authors think that this line is worth further study.}
\end{remark}

\section{Examples of isodynamic maps} 
\label{sec:ex} 

To illustrate our results and in order to give some intuition about the objects of our study,  we present below explicit formulas for the isodynamic maps and their discriminants for polynomials and rational functions of low degrees. 

\subsection{Isodynamic maps for polynomials}

\subsubsection{Cubics}  
In the classical case $d=3$, direct calculations give the following explicit formula for the isodynamic map  
\begin{equation}\label{eq:isoTriPolynomial}
ID_3: z^3+az^2+bz+c \; \mapsto \; (a^2-3b)u^2+(ab-9c)u+b^2-3ac.
\end{equation}
Equation \eqref{eq:isoTriPolynomial} implies that 
if the image $ID_3(P)$ of a cubic polynomial $P$ is linear, i.e., $a^2=3b$ then $P(z)=(z+a/3)^3+c-a^3/27$ which means that its zero locus is necessarily an equilateral triangle. 

\smallskip
In the above notation, the discriminant $\D_3$ of the family $ID_3(P)$ w.r.t. the variable $u$ equals $$\D_3=27c^2+(4a^3-18ab)c+4b^3-a^2b^2$$ which is exactly the discriminant $\D_3^0$ of the original polynomial family $P(z)=z^3+az^2+bz+c$ w.r.t. the variable $z$. In other words, two isodynamic points of a triple of points in the plane coincide if and only if at least two of the points in the triple are equal. Thus for $d=3$, $j_3^0=1$ and the discriminants $\D_3^W$ and $\D_3^M$ are empty, see Conjecture~\ref{conj:mult}.

\subsubsection{Quartics}  To simplify our formulas, note that the affine shift $z\to z+ t$ acts trivially on  the isodynamic map $ID_4$.  Using this fact, let us restrict our considerations to  the standard reduced polynomial family 
$$P_4(z)=z^4+az^2+bz+c$$ 
which is the classical versal deformation of the singularity $z^4$.  The isodynamic map 
restricted to the latter family is explicitly given by
{\small
\begin{equation}\label{eq:ID4}
ID_4: z^4+az^2+bz+c \mapsto (32a^3+108b^2)u^4+(72a^2b+864bc)u^3+
(-4a^4+108ab^2-288a^2c+1728c^2)u^2$$ $$+(-4a^3b+108b^3-432abc)u -9a^2b^2+32a^3c.
\end{equation}  }

Up to a constant factor, the leading coefficient $(32a^3+108b^2)$ in the right-hand side of \eqref{eq:ID4} is the discriminant of the derivative of $P_4(z)$ with respect of $z$.

\smallskip
The discriminant of the   right-hand side  of \eqref{eq:ID4}  w.r.t. $u$ is given by 
{\small
\begin{equation}\label{eq:discrr}  
\D_4=(2a^3+27b^2-72ac)^6(-4a^3b^2-27b^4+16a^4c+144ab^2c-128a^2c^2+256c^3).
\end{equation}  
}
The second factor    $-4a^3b^2-27b^4+16a^4c+144ab^2c-128a^2c^2+256c^3$ of the right-hand side of \eqref{eq:discrr} is $\D_4^0$ which is the standard discriminant  of the above family $P_4(z)$. The first factor $2a^3+27b^2-72ac$ is the Wronski discriminant $\D^W_4$. (Both discriminants are quasi-homogeneous with weights $w(a)=2$, $w(b)=3$, $w(c)=4$ of the variables. The total weight of $\D_4$ equals $12$; the total weights of  $\D_4^0$ and of $\D_4^W$ equal $6$.) The Maxwell discriminant $\D^M_4$ is empty and $j_4^0=1, \; j_4^W=6$. Figure~\ref{fig0} shows these discriminants separately and together.  
\begin{figure}

\begin{center}
\includegraphics[scale=0.3]{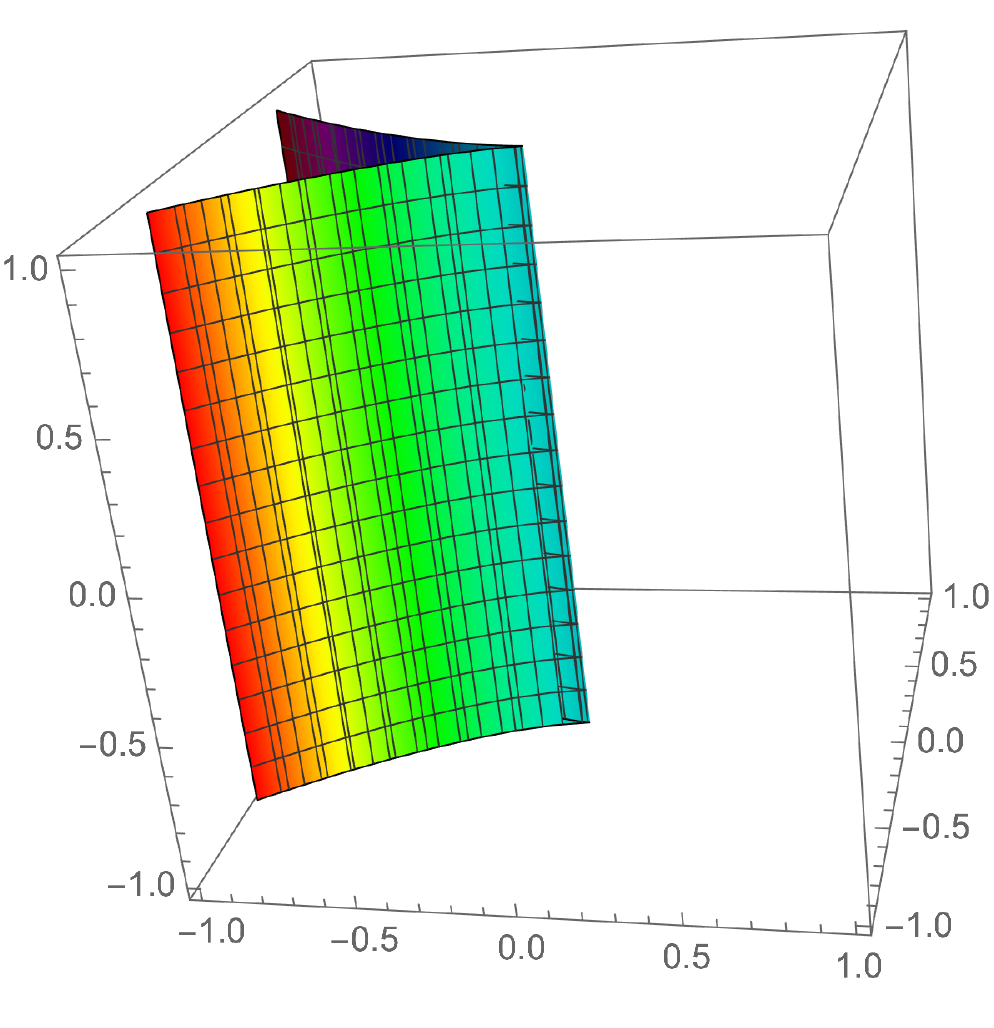}
\includegraphics[scale=0.3]{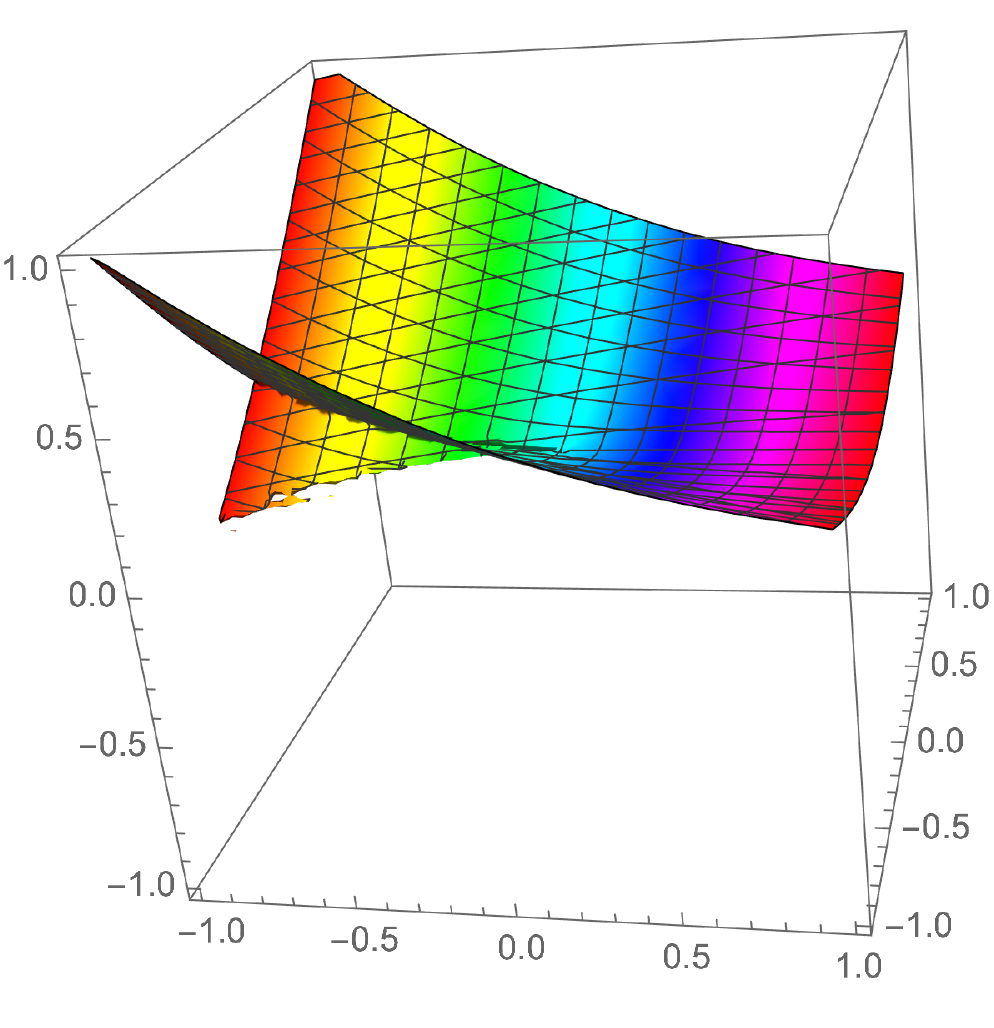}
\includegraphics[scale=0.3]{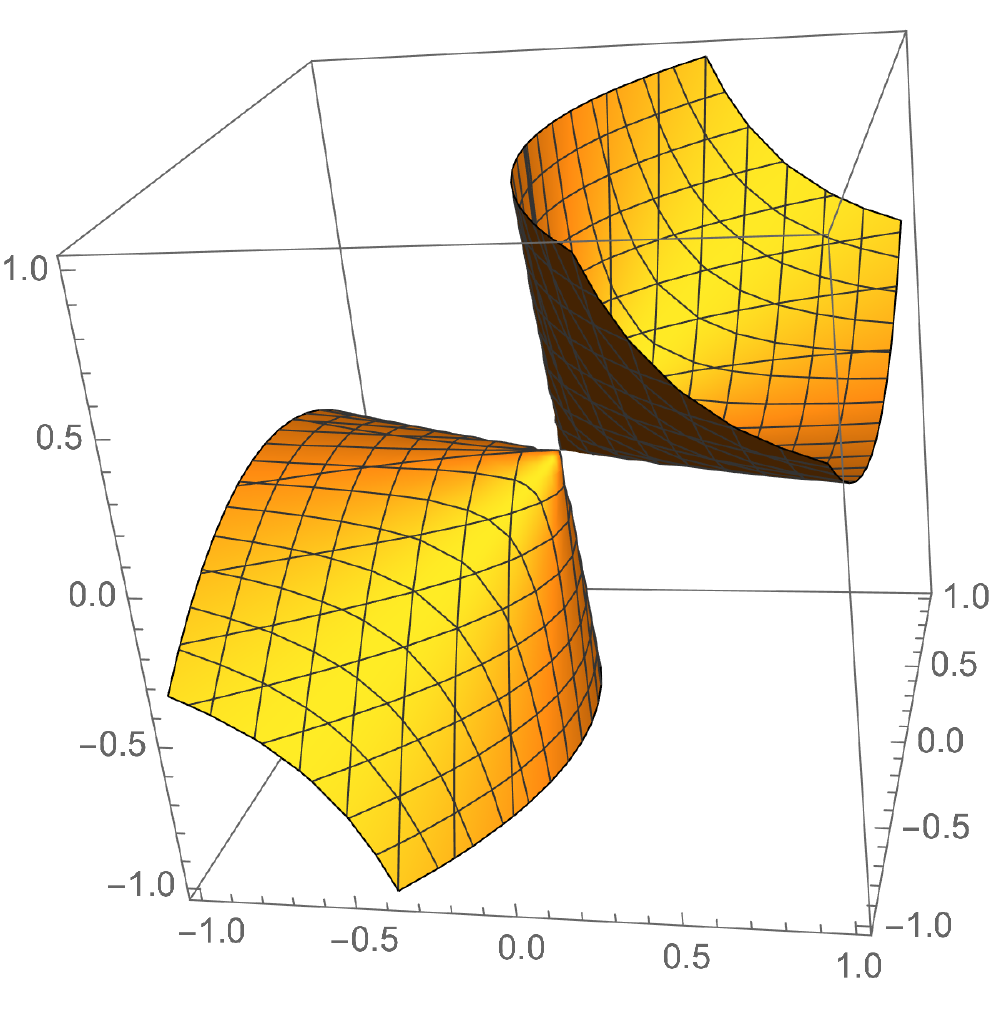}
\includegraphics[scale=0.5]{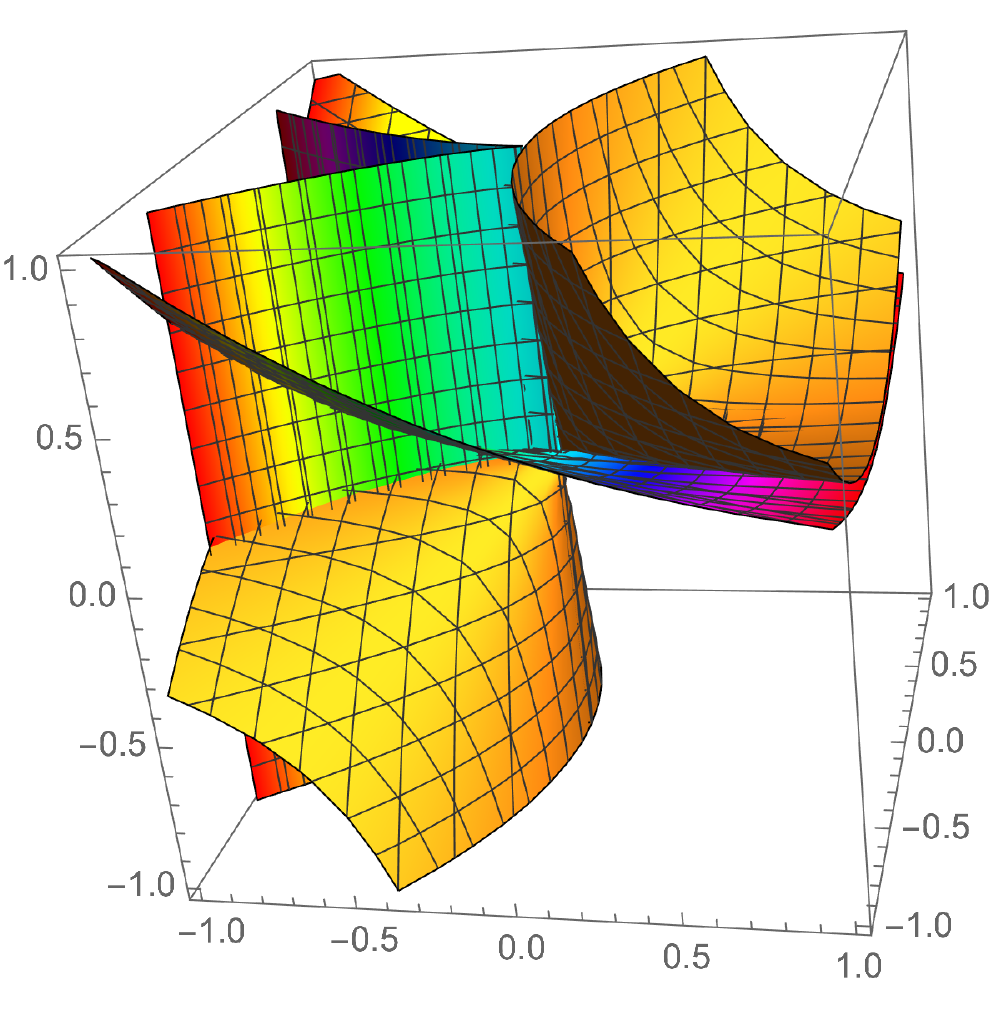}
\end{center}

\caption{Discriminants of the reduced isodynamic family $ID_4$ and their union (bottom). The top-left subfigure is the discriminant of $P^\prime$, the top-central subfigure is $\D_4^0$, i.e. the discriminant of $P$, and the top-right subfigure is $\D_4^W$. (The vertical axis is $c$, the horizontal one is $a$ and the one pointing northeast is $b$.)}
\label{fig0}
\end{figure}




\subsubsection{Quintics}  
Here we present the formulas for $ID_5$ and its discriminant  $\D_5$ since $d=5$ is the minimal value for which $\D_d$ contains all three irreducible components including the Maxwell stratum $\D_d^M$. (All formulas were calculated in Mathematica.)

\smallskip
For $P(z)=z^5+az^3+bz^2+cz+e$, one has 

{\tiny
$$ID_5= (-405 a^4 c+135 a^3 b^2+1800 a^2 c^2-2700 a b^2 c+675 b^4-2000 c^3)u^6-16(-2025 a^4 e-180 a^3 b c+135 a^2 b^3$$ 
$$+18000 a^2 c e -13500 a b^2 e -6000 a b c^2 +2700 b^3 c-30000 c^2 e)u^5 + (27 a^5 c-9 a^4 b^2-3600 a^3 b e+1800 a^3 c^2-600 a^2 b^2 c$$ 
$$+45000 a^2 e^2+135 a b^4-6000 a b c e-8400 a c^3-13500 b^3 e+7200 b^2 c^2-150000 c e^2)u^4 + (135 a^5 e-9 a^4 b c-2 a^3 b^3+3600 a^3 c e $$  
$$-3000 a^2 b^2 e+1560 a^2 b c^2-720 a b^3 c +120000 a b e^2-30000 a c^2 e+135 b^5-36000 b^2 c e+10800 b c^3-250000 e^3)u^3 $$ 
$$+ (135 a^4 b e-117 a^4 c^2+51 a^3 b^2 c-13500 a^3 e^2 -9 a^2 b^4+13200 a^2 b c e-2040 a^2 c^3-3600 a b^3 e +180 a b^2 c^2+60000 a c e^2$$ 
$$+135 b^4 c+45000 b^2 e^2-54000 b c^2 e+10800 c^4)u^2 +  (-270 a^4 c e+135 a^3 b^2 e+12 a^3 b c^2-9 a^2 b^3 c-13500 a^2 b e^2 + 600 a^2 c^2 e$$ $$+9000 a b^2 c e-2160 a b c^3-2025 b^4 e+540 b^3 c^2)u  + (675 a^4 e^2-540 a^3 b c e+128 a^3 c^3+135 a^2 b^3 e-36 a^2 b^2 c^2). 
$$
}

Standard discriminant: 
{\tiny
$$\D_5^0=
108 a^5 e^2-72 a^4 b c e+16 a^4 c^3+16 a^3 b^3 e-4 a^3 b^2 c^2-900 a^3 c e^2+825 a^2 b^2 e^2+560 a^2 b c^2 e-128 a^2 c^4-630 a b^3 c e
$$ 
$$+144 a b^2 c^3-3750 a b e^3+2000 a c^2 e^2+108 b^5 e-27 b^4 c^2+2250 b^2 c e^2-1600 b c^3 e+256 c^5+3125 e^4. 
$$
}

Wronskian discriminant: 
{\tiny
$$ \D_5^W= 675 a^{10} e^2-450 a^9 b c e-8 a^9 c^3+100 a^8 b^3 e+83 a^8 b^2 c^2-36000 a^8 c e^2-36 a^7 b^4 c+19500 a^7 b^2 e^2+22400 a^7 b c^2 e+640 a^7 c^4+4 a^6 b^6$$ 
$$-17100 a^6 b^3 c e-4400 a^6 b^2 c^3-150000 a^6 b e^3+620000 a^6 c^2 e^2+2650 a^5 b^5 e+3920 a^5 b^4 c^2-540000 a^5 b^2 c e^2-328000 a^5 b c^3 e-19200 a^5 c^5$$ 
$$-10000000 a^5 e^4-1110 a^4 b^6 c+157500 a^4 b^4 e^2+394000 a^4 b^3 c^2 e+77600 a^4 b^2 c^4+18000000 a^4 b c e^3-3200000 a^4 c^3 e^2+100 a^3 b^8-162000 a^3 b^5 c e$$ 
$$-88000 a^3 b^4 c^3-5500000 a^3 b^3 e^3-5000000 a^3 b^2 c^2 e^2+640000 a^3 b c^4 e+256000 a^3 c^6+19500 a^2 b^7 e+42800 a^2 b^6 c^2+2700000 a^2 b^4 c e^2$$ 
$$+920000 a^2 b^3 c^3 e-480000 a^2 b^2 c^5-25000000 a^2 b^2 e^4+20000000 a^2 b c^2 e^3-4000000 a^2 c^4 e^2-9000 a b^8 c-150000 a b^6 e^2-700000 a b^5 c^2 e$$ 
$$+224000 a b^4 c^4+30000000 a b^3 c e^3-32000000 a b^2 c^3 e^2+11200000 a b c^5 e-1280000 a c^7+675 b^{10}+90000 b^7 c e-28000 b^6 c^3-10000000 b^5 e^3$$
$$+11000000 b^4 c^2 e^2 -4000000 b^3 c^4 e+480000 b^2 c^6.
$$
}

Maxwell discriminant: 
{\tiny
$$\D_5^M=
9 c^2 a^6-3375 e^2 a^5-6 b^2 c a^5+b^4 a^4-380 c^3 a^4+2325 b c e a^4+40 b^2 c^2 a^3+30000 c e^2
   a^3-525 b^3 e a^3+4400 c^4 a^2-26250 b^2 e^2 a^2+15 b^4 c a^2$$ 
   $$-17000 b c^2 e a^2-4800 b^2 c^3 a+125000 b e^3 a-50000 c^2 e^2a+19500 b^3 c e a-8000 c^5+900 b^4 c^2-75000 b^2 c e^2-3375 b^5 e+50000 b c^3 e. 
$$}

Finally, $\D_5=\D_5^0 (\D_5^W)^3 (\D_5^M)^2.$ In other words, $j_5^0=1,\; j_5^W=3,\; j_5^M=2$.

\subsection{Isodynamic maps for rational functions}   

\smallskip
\noindent
\subsubsection{Case 1-1} 
 Let us consider the  isodynamic map for the simplest nontrivial family of rational functions $w(z)=\frac{p(z)}{q(z)}$ where $p(z)=z^2+az+b,\; q(z)=z+c$. (In this case $d=\pa=1$.) Then 
$$ID_{1,1}: \frac{z^2+az+b}{z+c}\mapsto  4( b-ac+c^2)(u^2+au+b)
$$
and its discriminant equals
$$\D_{1,1}=16(a^2-4b)(b-ac+c^2)^2.$$

\subsubsection{Case 2-1} 
Consider $w(z)=\frac{p(z)}{q(z)}$ where $p(z)=z^3+az^2+bz+c,\; q(z)=z+e$. (In this case $d=2$ and $\pa=1$.)  Then
{\small
$$ID_{2,1}: \frac{z^3+az^2+bz+c}{z+e}\mapsto  4 (e^3-c + b e - a e^2)(  (-a^3 + 27 c + 9 a^2 e - 27 b e) u^4 + (-6 a^2 b + 54 a c + 8 a^3 e - 
    18 a b e - 54 c e) u^3$$ 
    $$  + (18 a^2 c -12 a b^2 + 54 b c + 12 a^2 b e - 
    18 b^2 e - 54 a c e) u^2  + (18 a b c-8 b^3  + 54 c^2 + 6 a b^2 e - 
    54 b c e) u$$ $$ -9 b^2 c + 27 a c^2 + b^3 e - 
 27 c^2 e ). 
$$}

The  discriminant of  $ID_{2,1}$  is given by 
$$\D_{2,1}=-1289945088 (-a^2 b^2 + 4 b^3 + 4 a^3 c - 18 a b c + 27 c^2)^4 (c - 
   b e + a e^2 - e^3)^8. $$
  
  \subsubsection{Case 1-2}  
   Consider $w(z)=\frac{p(z)}{q(z)}$ where $p(z)=z^3+az^2+bz+c,\; q(z)=z^2+ez+f$. (In this case $d=1$ and $\pa=2$.)  Then 
{\tiny
$$ID_{1,2}: \frac{z^3+az^2+bz+c}{z^2+ez+f}\mapsto -16 (c + b u + 
   a u^2 + u^3)  (c^2 - b c e + a c e^2 - c e^3 + b^2 f - 2 a c f - a b e f + 
   3 c e f + b e^2 f + a^2 f^2 - 2 b f^2 - a e f^2 + f^3)$$ 
   $$  ((-b^3 + 27 c^2 + 3 a b^2 e - 27 b c e - 
    3 a^2 b e^2 + 27 a c e^2 + a^3 e^3 - 27 c e^3 + 18 b^2 f - 
    54 a c f - 9 a b e f + 81 c e f - 9 a^2 e^2 f + 27 b e^2 f + 
    27 a^2 f^2 - 81 b f^2) u^3 $$ 
    $$+ (-9 b^2 c + 
    27 a c^2 + 3 b^3 e - 9 a b c e - 6 a b^2 e^2 + 18 a^2 c e^2 + 
    3 a^2 b e^3 - 27 a c e^3 + 24 a b^2 f - 54 a^2 c f - 54 b c f - 
    21 a^2 b e f + 18 b^2 e f + 135 a c e f - 3 a^3 e^2 f$$ 
    $$ + 
    9 a b e^2 f + 27 a^3 f^2 - 72 a b f^2 - 81 c f^2 - 9 a^2 e f^2 + 
    27 b e f^2) u^2 + (-9 b^2 c e + 27 a c^2 e - 3 b^3 e^2 + 9 a b c e^2 + 
    3 a b^2 e^3 - 27 b c e^3 + 27 b^3 f - 72 a b c f - 81 c^2 f$$ 
    $$ - 
    21 a b^2 e f + 18 a^2 c e f + 135 b c e f - 6 a^2 b e^2 f + 
    18 b^2 e^2 f + 24 a^2 b f^2 - 54 b^2 f^2 - 54 a c f^2 + 
    3 a^3 e f^2 - 9 a b e f^2 - 9 a^2 f^3 + 27 b f^3) u -9 b^2 c e^2 + 27 a c^2 e^2$$ 
    $$ + b^3 e^3 - 27 c^2 e^3 + 27 b^2 c f - 
 81 a c^2 f - 9 a b c e f + 81 c^2 e f - 3 a b^2 e^2 f + 
 27 b c e^2 f + 18 a^2 c f^2 - 54 b c f^2 + 3 a^2 b e f^2 - 
 27 a c e f^2 - a^3 f^3 + 
 27 c f^3  ). 
$$}

The discriminant of $ID_{1,2}$ equals
{\tiny  
$$\D_{1,2}=21641687369515008 (-a^2 b^2 + 4 b^3 + 4 a^3 c - 18 a b c + 
   27 c^2) (e^2 - 4 f)\times $$ 
   $$(b^4 - 6 a b^2 c + 9 a^2 c^2 - a b^3 e + 
   3 a^2 b c e + 9 b^2 c e - 27 a c^2 e + b^3 e^2 + a^3 c e^2 - 
   9 a b c e^2 + 27 c^2 e^2 + 3 a^2 b^2 f - 10 b^3 f - 10 a^3 c f + 
   $$ 
   $$36 a b c f - 27 c^2 f - a^3 b e f + 3 a b^2 e f + 9 a^2 c e f - 
   27 b c e f + a^4 f^2 - 6 a^2 b f^2 + 9 b^2 f^2)^2 (c^2 - b c e + 
   a c e^2 - c e^3 + b^2 f - 2 a c f - a b e f$$ 
   $$ + 3 c e f + b e^2 f + 
   a^2 f^2 - 2 b f^2 - a e f^2 + f^3)^12 (-2 b^3 c + 9 a b c^2 - 
   27 c^3 + b^4 e - 3 a b^2 c e - 9 a^2 c^2 e + 27 b c^2 e - 
   a b^3 e^2 + 6 a^2 b c e^2$$ 
   $$ - 9 b^2 c e^2 + b^3 e^3 - a^3 c e^3 - 
   a b^3 f + 6 a^2 b c f - 9 b^2 c f + 6 b^3 e f - 6 a^3 c e f + 
   a^3 b e^2 f - 6 a b^2 e^2 f + 9 a^2 c e^2 f + a^3 b f^2 - 
   6 a b^2 f^2 $$ 
   $$+ 9 a^2 c f^2 - a^4 e f^2 + 3 a^2 b e f^2 + 
   9 b^2 e f^2 - 27 a c e f^2 + 2 a^3 f^3 - 9 a b f^3 + 27 c f^3)^6. $$}  

\section{Appendix I. Classical isodynamic points for plane triangles in our context}\label{sec:classical}

As we mentioned in the introduction classical isodynamic points for plane triangles have been studied for more than a century and for their properties are described in e.g. \cite{Wik, Ra, Pa}. The main result of this appendix is the following  statement  which relates the construction of the present paper to one of the definitions of the classical isodynamic points. 

\begin{proposition}\label{lm:isodynamicConnection}
The zeros of $ID_3(P)$ given by \eqref{eq:isoTriPolynomial} are the first and second isodynamic points of the triangle $T\subset\mathbb{C}$ whose vertices are the (noncollinear) zeros of $P(z) = z^3 + az^2 + bz + c$.
\end{proposition}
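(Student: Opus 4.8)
The plan is to recast the classical metric definition of the isodynamic points as a single polynomial equation in the variable $u$, and then to recognize that equation as \eqref{eq:isoTriPolynomial}.

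First I would set up convenient notation. For $u\in\bC$ put
\[
w_1(u)=(u-z_1)(z_2-z_3),\qquad w_2(u)=(u-z_2)(z_3-z_1),\qquad w_3(u)=(u-z_3)(z_1-z_2),
\]
each of which is affine in $u$, and record the elementary identity $w_1(u)+w_2(u)+w_3(u)\equiv 0$. By definition, $S\in\bC$ is an isodynamic point of $T$ iff its distances to the vertices are inversely proportional to the lengths of the opposite sides, i.e. iff $|w_1(S)|=|w_2(S)|=|w_3(S)|$. The $w_i(S)$ cannot all vanish (the only zero of $w_1$ is $z_1$, where $w_2\neq 0$), so this common modulus is positive, and three complex numbers of a common positive modulus summing to $0$ must form, as an ordered triple, either $\lambda(1,\omega,\omega^2)$ or $\lambda(1,\omega^2,\omega)$ with $\lambda\neq 0$ and $\omega=e^{2\pi i/3}$. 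Hence $S$ is an isodynamic point iff $w_2(S)/w_1(S)\in\{\omega,\omega^2\}$, equivalently iff
\[
Q(S)=0,\qquad\text{where}\quad Q(u):=w_1(u)^2+w_1(u)\,w_2(u)+w_2(u)^2 .
\]
Conversely every root $u$ of the quadratic $Q$ satisfies $u\notin\{z_1,z_2,z_3\}$ (one checks $Q(z_1)=w_2(z_1)^2\neq 0$, and similarly at $z_2,z_3$), so $w_2(u)/w_1(u)$ is a primitive cube root of unity; together with $w_1+w_2+w_3\equiv 0$ this forces $|w_1(u)|=|w_2(u)|=|w_3(u)|$, and $u$ is an isodynamic point. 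Therefore the zero set of $Q$ is exactly the (unordered) pair of isodynamic points, the two roots corresponding to the two choices $\omega,\ \omega^2$, i.e. to the first and second isodynamic point.

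It remains to check that $Q(u)=ID_3(P)$. Using $w_1+w_2=-w_3$ I would write $Q(u)=(w_1+w_2)^2-w_1w_2=w_3(u)^2-w_1(u)w_2(u)$, expand this as a polynomial in $u$, and insert the Vieta relations $z_1+z_2+z_3=-a$, $z_1z_2+z_1z_3+z_2z_3=b$, $z_1z_2z_3=-c$; a short symmetric-function computation yields
\[
Q(u)=(a^2-3b)u^2+(ab-9c)u+(b^2-3ac),
\]
which is precisely the right-hand side of \eqref{eq:isoTriPolynomial}. (In particular, although $Q$ is not visibly symmetric in $z_1,z_2,z_3$, its equality with $ID_3(P)$ shows a posteriori that it is.) This settles the statement when $T$ is not equilateral. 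If $T$ is equilateral then $a^2-3b=0$, so $ID_3(P)$ is linear, its unique zero is the common centroid $(z_1+z_2+z_3)/3=-a/3$, and in the equilateral case this centroid is exactly the unique isodynamic point, so the statement holds in this reading as well.

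All the computations here are routine; the only real content is the observation that the equal-modulus condition defining the isodynamic points is equivalent to the vanishing of $w_1^2+w_1w_2+w_2^2$, which itself rests on the rigidity of three equal-length vectors summing to zero. Consequently I do not expect a genuine obstacle; the points needing a little care are fixing the correct (distance) characterization of the classical isodynamic points among the several equivalent ones, checking both directions of the equivalence ``isodynamic point $\Leftrightarrow$ root of $Q$'' (including that roots of $Q$ avoid the vertices), and the degenerate equilateral case. (Alternatively, since $ID_3$ is $\cM$-equivariant by Theorem~\ref{th:main} and the classical isodynamic points are $\cM$-equivariant by \cite{Ki}, one could instead reduce the verification to a single generic triangle, but this would not shorten the argument.)
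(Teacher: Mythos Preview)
Your argument is correct and takes a genuinely different route from the paper's own proof. The paper proceeds by normalization: it sends the triangle to $z_1=0,\ z_2=1,\ z_3=\rho$ with $\mathrm{Im}\,\rho>0$, writes down the two roots $u_1,u_2$ of $ID_3(P)$ explicitly, and then verifies by a direct modulus computation that each of them satisfies $|u_i-z_1|\,|z_3-z_2|=|u_i-z_2|\,|z_3-z_1|$ (the remaining equality following by symmetry), the calculation eventually reducing to $|\pm i\sqrt{3}-1|=2$. Your approach instead stays coordinate-free: you observe that the three affine functions $w_i(u)$ sum to zero identically, so the equal-modulus condition forces $(w_1,w_2,w_3)$ to be a scaled triple of cube roots of unity, whence the isodynamic locus is exactly the zero set of the quadratic $Q=w_1^2+w_1w_2+w_2^2$; a symmetric-function computation then identifies $Q$ with $ID_3(P)$. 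What this buys you is a conceptual explanation of \emph{why} $ID_3(P)$ has the particular form \eqref{eq:isoTriPolynomial}, and your argument handles both implications and the equilateral degeneration cleanly, whereas the paper's verification is one-directional and relies implicitly on there being exactly two classical isodynamic points. The paper's approach, on the other hand, avoids the (easy) lemma on three equal-length vectors summing to zero and replaces all of the symmetric-function algebra by a single explicit check.
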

\begin{proof}
Let $z_1,\,z_2$ and $z_3$ denote the zeros of $P(z)$. We will show that the zeros $u_1,\,u_2$ of $ID_3(P)$ satisfy the equations
\begin{equation}\label{eq:isodynamicProdEq}
\vert u-z_1\vert \vert z_3 - z_2\vert = \vert u-z_2\vert \vert z_3 - z_1\vert = \vert u-z_3\vert \vert z_2 - z_1\vert
\end{equation}
which is one of the definitions for the isodynamic points of $T$. Due to symmetry of the vertices of $T$, it is sufficient to show that the first of these equations is satisfied. Furthermore, we can restrict ourselves to the case $z_1 = 0,\,z_2 = 1$ and $z_3 = \rho\in\{\zeta\in\mathbb{C}\mid\mathrm{Im}(\zeta)>0\}$ since these points are the vertices of any triangle $T\subset\mathbb{C}$ under the action of affine transformations.

Under these conditions we have that
$$u_1 = \frac{\rho(\rho+1)-\sqrt{-3(\rho(\rho-1))^2}}{2+2\rho(\rho-1)}$$
and
$$u_2 = \frac{\rho(\rho+1)+\sqrt{-3(\rho(\rho-1))^2}}{2+2\rho(\rho-1)}.$$
Thus, for $u_1$ we need to show that
\begin{equation}\label{eq:isodynamicRho}
\vert u_1 - z_1\vert\vert z_3-z_2\vert = \vert u_1 - z_2\vert\vert z_3-z_1\vert,
\end{equation}
or, equivalently (for all $\rho\neq\frac{1\pm\sqrt{3}i}{2}$; the pole $\rho=\frac{1+\sqrt{3}i}{2}$ can be seen to satisfy equation \eqref{eq:isodynamicRho}),
\begin{equation}\label{eq:isodynamicRho2}
\left\vert\frac{(u_1 - z_1)(z_3-z_2)}{(u_1 - z_2)(z_3-z_1)}\right\vert = 1 \iff \left\vert \frac{\left(\frac{\rho(\rho+1)-\sqrt{-3(\rho(\rho-1))^2}}{2+2\rho(\rho-1)} - 0\right)(\rho-1)}{\left(\frac{\rho(\rho+1)-\sqrt{-3(\rho(\rho-1))^2}}{2+2\rho(\rho-1)} - 1\right)(\rho-0)} \right\vert = 1.
\end{equation}
Equation \eqref{eq:isodynamicRho2} simplifies to
\begin{equation}\label{eq:isodynamicRho3}
\left\vert\frac{\sqrt{3}\rho(\rho-1)}{\sqrt{-(\rho(\rho-1))^2}}-1\right\vert = 2
\end{equation}
which is equivalent to $\vert \pm\sqrt{-3}-1\vert=2$. Since $\pm\sqrt{-3}=\pm i\sqrt{3}$ the later fact is trivial. Calculations for $u_2$ are completely similar.

\end{proof}





\section{ Final remarks} \label{sec:final}

\smallskip
\noindent
{\bf 1.}  The following analog of the famous theorem of E.~Laguerre, see e.g. \S~13 of Ch. 3 in \cite{Ma}, about the location of the roots of the polar derivative in our current setting is supported by our numerical experiments with polynomials of degrees $\ge 3$ with randomly distributed roots in various rectangles.

\begin{conjecture}\label{th:Laguerre} For a univariate polynomial $P$  of degree $d\ge 3$ with at most double roots, no circle or line in $\bC$  separates the zero locus of $P(z)$ from $\I_P$. 
\end{conjecture}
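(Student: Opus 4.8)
The plan is to deduce Conjecture~\ref{th:Laguerre} from E.~Laguerre's classical theorem on the zeros of the polar derivative (see \S~13 of Ch.~3 in \cite{Ma}) together with a branched-covering analysis of the associated rational function $R_P$. The case in which $P$ has a genuine double root $z_0$ is immediate: then $\D(z,z_0)=d\,P(z)+(z_0-z)P^\prime(z)$ satisfies $\D(z_0,z_0)=0$ and $\D^\prime_z(z_0,z_0)=(d-1)P^\prime(z_0)=0$, so $\D(\cdot,z_0)$ has a multiple root and $z_0$ is a zero of $ID_d(P)$; thus $z_0$ lies in both the zero locus $Z(P)$ of $P$ and in $\I_P$, and no circle can separate these two sets. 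So assume all roots of $P$ are simple and argue by contradiction: normalizing by the M\"obius equivariance of Theorem~\ref{th:main}, suppose some circle or line bounds an open circular region $\Omega\subset\bC P^1$ (an open disk, an open half-plane, or the open exterior of a closed disk) with $\I_P\subset\Omega$ and $Z(P)\subset\Omega^\ast$, where $\Omega^\ast:=\bC P^1\setminus\overline{\Omega}$ is the complementary open circular region.

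By Lemma~\ref{lm:triv}, $\I_P$ is exactly the set of critical values of the degree-$(d-1)$ rational self-map $R_P(z)=z-\frac{d\,P(z)}{P^\prime(z)}$ of $\bC P^1$; since the roots of $P$ are simple, $R_P$ has exactly $2d-4$ critical points counted with multiplicity, and all of them lie in $R_P^{-1}(\I_P)\subset R_P^{-1}(\Omega)$. Laguerre's theorem states that if all zeros of $P$ lie in a circular region $C$ and $\zeta\notin C$ then every zero of $\D(z,\zeta)=d\,P(z)+(\zeta-z)P^\prime(z)$ lies in $C$; since these zeros are precisely $R_P^{-1}(\zeta)$, taking $C=\bC P^1\setminus\Omega$ gives $V:=R_P^{-1}(\Omega)\subset\bC P^1\setminus\Omega$. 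Therefore $R_P$ maps $\bC P^1\setminus V$ properly onto $\bC P^1\setminus\Omega$ with degree $d-1$ and \emph{without ramification}, because all $2d-4$ units of ramification of $R_P$ are confined to $V$. As $\bC P^1\setminus\Omega$ is simply connected, this exhibits $\bC P^1\setminus V$ as a trivial $(d-1)$-sheeted cover: $\bC P^1\setminus V=E_1\sqcup\dots\sqcup E_{d-1}$ is a disjoint union of closed topological disks, each mapped biholomorphically onto $\bC P^1\setminus\Omega$ by $R_P$; equivalently, each interior $E_j^\circ$ is mapped biholomorphically onto $\Omega^\ast$.

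Next, a direct computation shows that the fixed points of $R_P$ are exactly the $d$ (simple) roots of $P$ --- one has $R_P(z_i)=z_i$ for every root $z_i$, while a degree-$(d-1)$ rational map has exactly $d$ fixed points with multiplicity --- and that each $z_i$ has multiplier $R_P^\prime(z_i)=1-d$. Each root, being a fixed point lying in the open region $\Omega^\ast$, lies outside $V=R_P^{-1}(\Omega)$ and off the circle $\partial\Omega$, hence lies in the interior $E_j^\circ$ of one of the components. Since $\Omega$ is connected and $\Omega\subset\bC P^1\setminus V=\bigsqcup_j E_j$, there is a distinguished component $E_1$ with $\Omega\subset E_1$; then $E_j\cap\Omega=\emptyset$, and thus $E_j^\circ\subset\Omega^\ast$, for every $j\ge 2$, so the inverse branch $g_j:=\bigl(R_P|_{E_j^\circ}\bigr)^{-1}\colon\Omega^\ast\to E_j^\circ\subset\Omega^\ast$ is a holomorphic self-map of the disk $\Omega^\ast$. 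As $g_j$ is not the identity (otherwise $R_P$ would be), the Schwarz--Pick lemma shows $g_j$ has at most one fixed point in $\Omega^\ast$; equivalently, each of $E_2^\circ,\dots,E_{d-1}^\circ$ contains at most one root of $P$. These $d-2$ components then account for at most $d-2$ roots, so the distinguished component $E_1^\circ$ must contain at least two roots of $P$.

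The expected main obstacle is to rule out this last possibility. For $E_1$ the Schwarz--Pick argument breaks down: although $R_P$ still maps $E_1^\circ$ biholomorphically onto $\Omega^\ast$, the disk $E_1^\circ$ is \emph{not} contained in $\Omega^\ast$ --- it contains the forbidden region $\Omega$ --- and the overlap $E_1^\circ\cap\Omega^\ast$ is typically an annulus, not a disk, so there is no holomorphic self-map of a disk to be rigidified. Closing this gap should require one of the following: a hyperbolic-contraction estimate adapted to the annular overlap $E_1^\circ\cap\Omega^\ast$; exploiting the extra rigidity that every fixed point of $R_P$ carries the \emph{same} multiplier $1-d$, which ought to obstruct a biholomorphism $\Omega^\ast\to E_1^\circ$ fixing two points with prescribed equal derivatives; or a global index count, e.g.\ via the holomorphic Lefschetz fixed-point formula for $R_P$, ruling out $d$ fixed points of multiplier $1-d$ distributed among $d-1$ such components. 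Settling the case of simple roots along these lines would then complete the proof, the double-root case having been disposed of at the outset.
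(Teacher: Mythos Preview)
The paper does not prove this statement. It appears in \S\ref{sec:final} as a \emph{conjecture}, motivated only by numerical experiments with random polynomials; no argument is offered. So there is no ``paper's own proof'' against which to compare your attempt.

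Your proposal is likewise not a proof, and you say so yourself: after a clean reduction you arrive at an explicit gap --- the distinguished sheet $E_1^\circ$ (the one containing $\Omega$) might carry two or more roots of $P$ --- and you list three possible routes to close it without executing any of them. The parts you do carry out look correct: the double-root case is immediate; the inclusion $R_P^{-1}(\Omega)\subset\bC P^1\setminus\Omega$ follows from Laguerre's theorem as you say (with routine care about the point at $\infty$, which the M\"obius normalization handles); the unramified $(d-1)$-sheeted cover over the simply connected closed disk $\bC P^1\setminus\Omega$ does trivialise into $d-1$ disjoint closed disks; and Schwarz--Pick does bound the number of fixed points in each of the $d-2$ non-distinguished sheets by one. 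This is a tidy reduction of the conjecture to a concrete sub-problem.

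Be aware, though, that at least one of your suggested endgames gives no information: the rational fixed-point index identity $\sum_i 1/(1-\lambda_i)=1$ for a degree-$(d-1)$ map is automatically satisfied by $d$ fixed points each of multiplier $1-d$, so a global Lefschetz/index count does not obstruct any particular distribution of the roots among the sheets. The equal-multiplier rigidity idea is also not obviously decisive, since the inverse branch $g_1:\Omega^\ast\to E_1^\circ$ is a biholomorphism onto a domain strictly larger than $\Omega^\ast$, and nothing standard forbids such a univalent map from having two fixed points with equal derivative. As far as the paper is concerned, the obstacle you isolate remains open.
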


\smallskip
\noindent
{\bf 2.} Our numerical experiments with the isodynamic points for the Legendre and the Laguerre polynomials resulted in the following intriguing pictures which need to be explained, see Figure \ref{fig:isodynamic} below.

\begin{figure}[htp]
\begin{center}
\includegraphics[width=.36\textwidth]{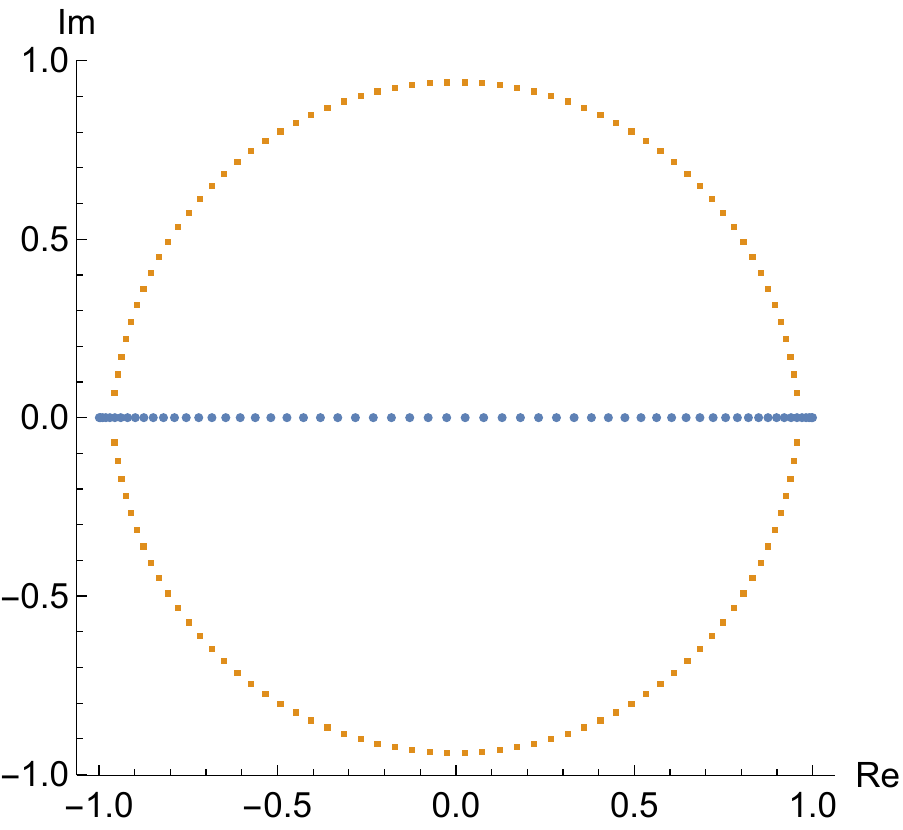}\hfill
\includegraphics[width=.62\textwidth]{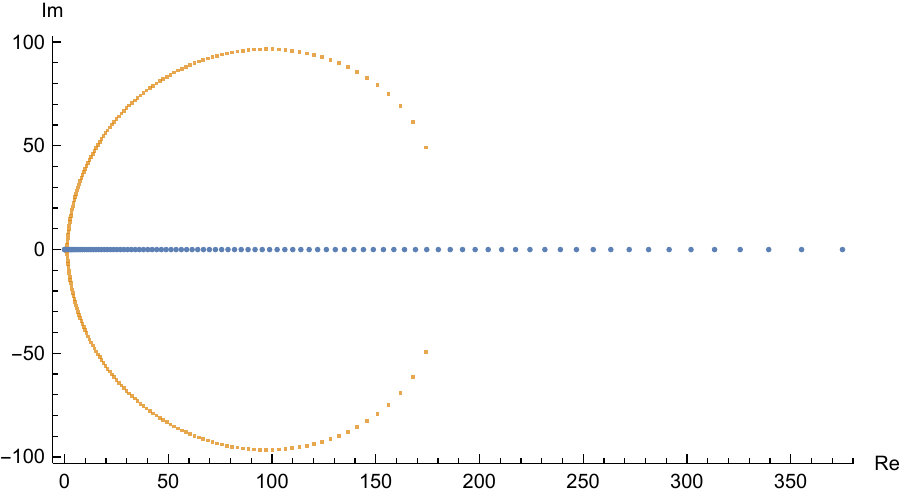}
\end{center}
\caption{The zeros of the 60th Legendre polynomial (left, blue) and the 100th Laguerre polynomial (right, blue), shown along with the isodynamic points of these polynomials (brown).}
\label{fig:isodynamic}
\end{figure}

\smallskip
\noindent
{\bf 3.}  Is our construction of the isodynamic map unique in some appropriate sense?

\smallskip
\noindent
{\bf 4.}  Is it possible to find a natural algebraic -- geometric interpretation of the divisor of isodynamic points?  Taking into account that the linear spaces $\BB_n$ of degree $n$ binary forms are the standard irreducible representation of  $sl_2(\bC)$ is there a representation -- theoretic meaning of our constructions. 

\smallskip
\noindent
{\bf 5.} Prove Conjecture~\ref{conj:mult} and its analog for rational functions. 



\end{document}